\newtheorem{theorem}{Theorem}[section]
\newtheorem{proposition}[theorem]{Proposition}
\newtheorem{corollary}[theorem]{Corollary}
\newtheorem{definition}[theorem]{Definition}
\newtheorem{lemma}[theorem]{Lemma}
\newcommand{\NEW}[1]{\begingroup\color{blue}#1\endgroup}
\begin{document}
\title{Shapes of interacting RNA complexes}

\author{Benjamin M. M. Fu$^1$ \ Christian M. Reidys$^2$\footnotemark \\
Department of Mathematics and Computer Science,\\
 University of Southern Denmark,\\
 Campusvej 55, DK-5230 \\
Odense M, Denmark\\
Email$^1$: benjaminfmm@imada.sdu.dk\\
number$^1$: 45-40485667\\
Email$^{2*}$: duck@santafe.edu\\
number$^{2*}$: 45-24409251\\
Fax$^{2*}$: 45-65502325
}
\date{}
\maketitle
\footnotetext[2]{Corresponding author.}
\newpage
\begin{abstract}
Shapes of interacting RNA complexes are studied using a filtration 
via their topological genus. A shape of an RNA complex is obtained
by (iteratively) collapsing stacks and eliminating hairpin loops. 
This shape-projection preserves the topological core of the RNA 
complex and for fixed topological genus there are only finitely
many such shapes.
Our main result is a new bijection that relates the shapes
of RNA complexes with shapes of RNA structures.
This allows to compute the shape polynomial of RNA complexes 
via the shape polynomial of RNA structures.
We furthermore present a linear time uniform sampling algorithm 
for shapes of RNA complexes of fixed topological genus. \\
{\bf  keywords:} Interacting RNA complexes  \and 
\  Shape polynomials \and \ Topological genus \and \ Bijection 
\and Uniform generation
\end{abstract}

\newpage
\section{Introduction}

In this paper we study shapes of RNA complexes, which
constitute one of the fundamental mechanisms of cellular regulation. 
We find such interactions in a variety of contexts: 
small RNAs binding a larger (m)RNA target including: the regulation 
of translation in both prokaryotes \citet{Vogel:07} and eukaryotes 
\citet{McManus,Banerjee}, the targeting of chemical modifications 
\citet{Bachellerie}, insertion editing \citet{Benne} and transcriptional 
control \citet{Kugel}. RNA-RNA interactions are far more complex 
than simple sense-antisense interactions.
This is observed for a vast variety of RNA classes including miRNAs,
siRNAs, snRNAs, gRNAs, and snoRNAs.

An RNA molecule is a linearly oriented sequence of four types of nucleotides,
namely,
{\bf A}, {\bf U}, {\bf C}, and {\bf G}. This sequence is endowed with a
well-defined orientation from the $5'$- to the $3'$-end and referred to as
the backbone.
Each nucleotide can form a base pair by interacting with at most one other
nucleotide by establishing hydrogen bonds. Here we restrict ourselves to
Watson-Crick base pairs {\bf GC} and {\bf AU} as well as the wobble base
pairs {\bf GU}. In the following, base triples as well as other types of more
complex interactions are neglected.

RNA structures can be presented as diagrams by drawing the backbone
horizontally and all base pairs as arcs in the upper half-plane,
see Fig.~\ref{F:RNAp}.
This set of arcs provides our coarse-grained RNA structure, 
ignoring any spatial embedding or geometry of the molecule
beyond its base pairs.

 \begin{figure}[ht]
 \begin{center}
 \includegraphics[width=0.9\textwidth]{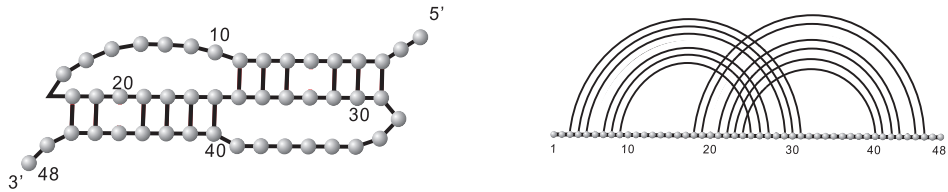}
 \end{center}
 \caption{\small (A) An RNA secondary structure and its diagram
 representation (B).
 }\label{F:RNAp}
 \end{figure}

As a result, specific classes of base pairs translate into distinct
structure categories, the most prominent of which being secondary
structures \citet{Kleitman:70,Nussinov:1978,Waterman:79a,waterman1978secondary}.
Represented as diagrams, secondary structures have only non-crossing
base pairs (arcs).
Beyond RNA secondary structures we find RNA pseudoknot structures.
These exhibit cross serial interactions \citet{Rivas:99}. 
Once such cross serial interactions are considered the question of 
a meaningful filtration arises and to establish a relation to the
well-studied RNA secondary structures.

It turns out that topological genus is such a meaningful observable.
The genus of pseudoknotted, single stranded RNA has been studied in
\citet{vernizzi, vernizzib,bon,andersen2011} and there are several 
alternative filtrations of cross-serial interactions 
\citet{Orland:02,Reidys:11a,Reidys:10w}.

The objects studied here are derived from RNA complexes, that 
are diagrams over two backbones. Distinguishing
internal and external arcs, the former being arcs within one backbone and
the latter connecting the backbones, RNA complexes can be
represented by drawing the two backbones on top of each other, see
Fig.~\ref{F:diag_represent}.

 \begin{figure}[ht]
 \begin{center}
 \includegraphics[width=0.9\textwidth]{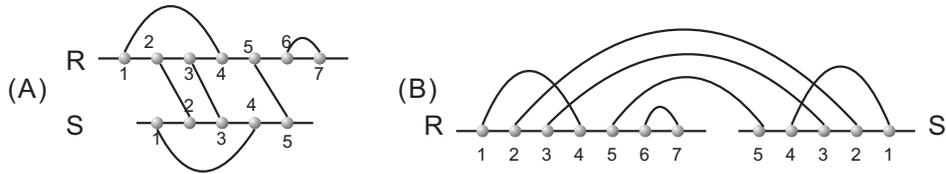}
 \end{center}
 \caption{\small Diagram representation of an RNA complex.
 }\label{F:diag_represent}
 \end{figure}


We shall study shapes of RNA complexes, which are obtained by
recursively removing all arcs of length one and collapsing all 
parallel arcs, see Fig.~\ref{F:shape-map}. 
 \begin{figure}[ht]
 \begin{center}
 \includegraphics[width=0.9\textwidth]{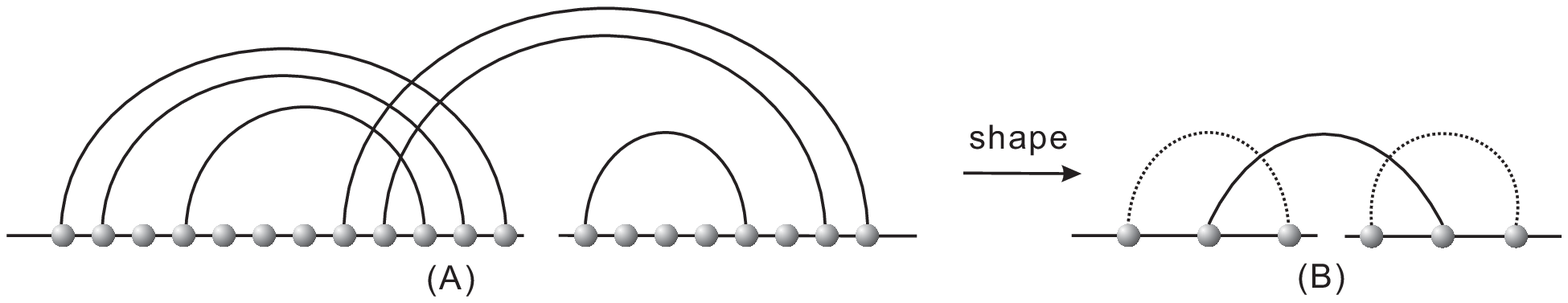}
 \end{center}
 \caption{\small From a $2$-backbone diagram to its shape.  The
 dashed arcs represent the rainbows (plants) of the shape.
 }\label{F:shape-map}
 \end{figure}


Shapes are tailored to preserve the topological information of the molecule.
The particular topologization is obtained via the notion of fat graphs, which 
date back to Cayley. The classification and expansion of pseudoknotted RNA 
structures in terms of topological genus of a fat graph or double line graph
were first proposed by \citet{Orland:02} and \citet{Bon:08}.
In the context of RNA secondary structures, fat graphs were employed
even earlier in \citet{Waterman:93} and \citet{Penner:03}.  
The results of \citet{Orland:02} are based on the matrix models and
are conceptually independent. Genus, as well as other 
topological invariants of fat graphs were introduced and studied as 
descriptors of proteins in \citet{protein}.

The approach undertaken here is combinatorial and follows \citet{Fenix:t2}: 
starting with the diagram representation we inflate each edge, including 
backbone edges, into ribbons. As each ribbon has
two sides and specifying a counter-clockwise rotation around each vertex,
we obtain so called boundary cycles with a unique orientation. It is clear
that we have thus constructed a surface and its topological genus
provides the desired filtration. Naturally there are many such ribbon graphs
that produce the same topological surface (by gluing the two ``complementary''
sides of each ribbon), this is how we obtain the desired equivalence
(complexity) classes of structures.

It is easy to see that transforming an interaction structure into its shape
preserves topological genus and in Lemma~\ref{L:finite} we shall see that
for fixed genus $g$ there exist only finitely many such shapes of RNA 
complexes. This means that for fixed genus there are only finitely many
topologically distinct configurations and important information is captured
in the generating polynomial. In Theorem~\ref{T:shap2} we shall compute this
polynomial and relate its coefficients to shapes of RNA structures by means
of bijections relating one and two backbone shapes.

In \citet{fenix-shape} a linear time algorithm for uniformly generating shapes of
RNA structures of fixed topological genus was given. By means of the bijection 
of Theorem~\ref{T:main} relating one and two backbone shapes we can use this
algorithm to generate uniformly shapes of RNA complexes.

The paper is organized as follows: in Section~\ref{S:2} we introduce diagrams
and the basic framework in which we formulate our results. We discuss fat graphs 
and the topological filtration namely as drawing these diagrams on orientable
surfaces of higher topological genus. In Section~\ref{S:3} we develop the concept
of shapes and establish basic properties. We recall some key results on shapes of
RNA structures, in particular the two term recursion for computing their 
coefficients. In Section~\ref{S:4} we analyse shapes of RNA complexes
and relate them to shapes of RNA structures. Several constructions
show how to derive one from the other by specific ``shape-surgery''. Here we also
present the uniform generation algorithm of shapes of RNA complexes of fixed
topological genus.
In Section~\ref{S:5} we discuss specific RNA complexes, that all have a fixed
shape and in Section~\ref{S:6} we integrate and discuss our results. 

\newpage
\section{Some basic facts}\label{S:2}
\begin{definition}
A diagram is a labeled graph over the vertex set $[n]=\{1,2,\ldots,n\}$ 
represented by drawing the vertices $1,2,\ldots,n$ on a horizontal line 
in the natural order and the arcs $(i,j)$, where $i<j$, in the upper 
half-plane.
The backbone of a diagram is the sequence of
consecutive integers $(1,\dots,n)$ together with the edges $\{\{i,i+1\}
\mid 1\le i\le n-1\}$.  A diagram over $b$ backbones is a diagram
together with a partition of $[n]$ into $b$ backbones, 
see Fig.~\ref{F:diagram}.
\end{definition}
 \begin{figure}[ht]
 \begin{center}
 \includegraphics[width=0.7\textwidth]{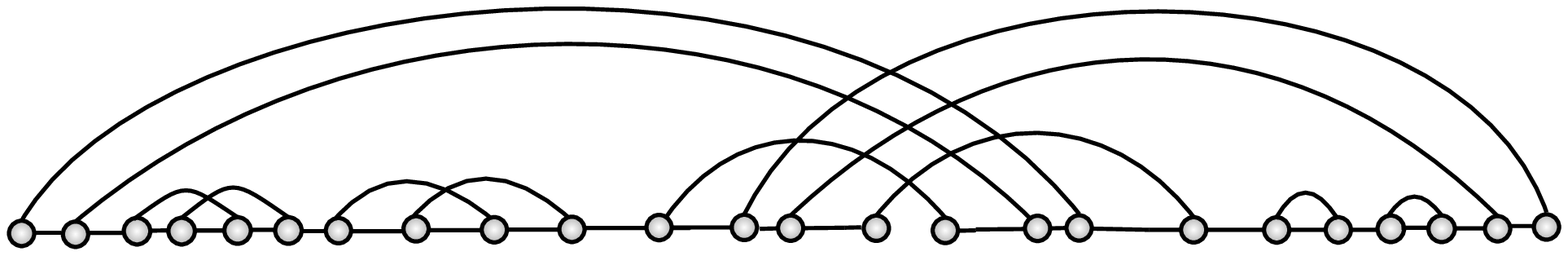}
 \end{center}
 \caption{A $2$-backbone diagram with $24$ vertices and $12$ arcs.
 }\label{F:diagram}
 \end{figure}
We shall distinguish backbone edges $\{i,i+1\}$ from arcs $(i,i+1)$, 
which we refer to as $1$-arcs. Two arcs $(i,j)$, $(r,s)$, where $i<r$ 
are crossing if $i<r<j<s$ holds.
Parallel arcs of the form $\{(i,j), (i+1,j-1), \cdots, (i+\ell-1, j-\ell+1)\}$
are called a stack, and $\ell$ is called the length of the stack.
A stack on $[i, j]$ of length $k$ naturally induces $(k-1)$ pairs of intervals 
of the form $([i+l, i+l+1],[j-l-1, j-l])$ where $0\leq l \leq k-2$. Any of 
these $2(k-1)$ intervals is referred to as a \emph{$P$-interval}. 
An interval $[i, i +1]$ is called a \emph{gap} if there exists a pair of 
subsequent backbones $B_1$ and $B_2$ such that $i(i+1)$ is the 
rightmost(leftmost) vertex of $B_1(B_2)$. The vertex $i$ is referred to as 
\emph{cut vertex}. Any interval other than a gap or $P$-interval is called a 
\emph{$\sigma$-interval}. Clearly, a diagram over $[n]$ contains $(n-1)$ 
intervals of length 1 and we distinguish three types: gap intervals, 
$P$-intervals and $\sigma$-intervals; see Fig.~\ref{F:backint}.
\NEW{
\begin{figure}[ht]
\begin{center}
\includegraphics[width=0.7\textwidth]{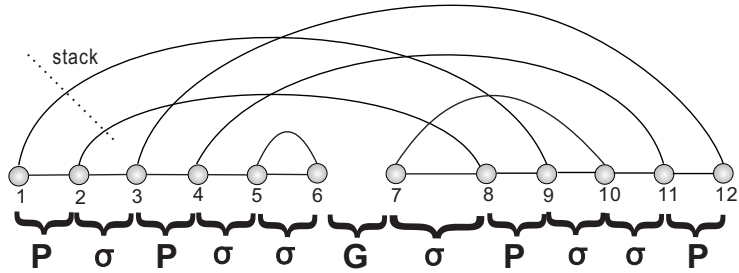}
\end{center}
\caption{\small Stacks and intervals:
gap intervals, $\sigma$-intervals and  $P$-intervals labelled  
by { G}, { $\sigma$} and {P}. 
There are $4$ stacks: $\{(1,9),(2,8)\}$, $\{(3,12),(4,11)\}$, $\{(5,6)\}$ and
$\{(7,10)\}$. 
}\label{F:backint}

\end{figure}}

Vertices and arcs of a diagram correspond to nucleotides and base pairs, respectively.
For a diagram over $b$ backbones, the leftmost vertex of each back-bone denotes the
$5'$ end of the RNA sequence, while the rightmost vertex denotes the $3'$ end.
The particular case $b=2$ is referred to as RNA interaction structures or RNA complexes. 
RNA complexes are oftentimes represented alternatively by drawing the two backbones
on top of each other, see Fig.~\ref{F:top}.
\begin{figure}[ht]
\begin{center}
\includegraphics[width=0.9\textwidth]{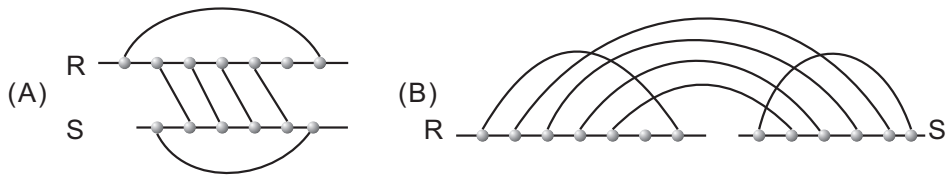}
\end{center}
\caption{\small LHS: An RNA complex presented by drawing the two backbones on top 
of each other. RHS: The corresponding diagram over two backbones.}
\label{F:top}
\end{figure}

We will add an additional ``rainbow-arc'' over each respective backbone
and refer to these diagrams as \textit{ planted  diagrams}, see Fig.~\ref{F:planted}.

 \begin{figure}[ht]
 \begin{center}
 \includegraphics[width=0.8\textwidth]{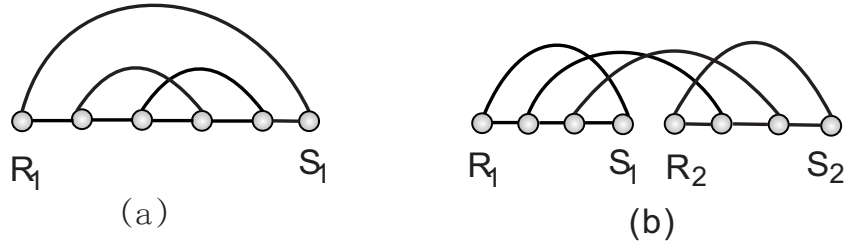}
 \end{center}
 \caption{(a) planted $1$-backbone diagram with the plant arc $(R_1,S_1)$,
 (b) planted $2$-backbone diagram with the plant arc $\{(R_1,S_1),(R_2,S_2)\}$.
 }\label{F:planted}
 \end{figure}
A fat graph is a graph enriched by a cyclic ordering of the incident
half-edges at each vertex and consists of the following data: a set of
half-edges, $H$, cycles of half-edges as vertices and pairs of half-edges
as edges. The idea of half-edges stems from the observation that untwisted ribbons
have two sides and are traversed in complementary directions. 
It is then a matter of convention to denote the terminal half 
of these sides as half-edge, see Fig.~\ref{F:fat}.

The specific drawing of a diagram $G$ in the plane determines a cyclic 
ordering on the half edges of the underlying graph incident on each vertex, 
thus defining a corresponding fat graph $\mathbb{G}$. The collection of cyclic 
orderings is called fattening, one such ordering on the half-edges incident on 
each vertex, see Fig.~\ref{F:inflate}.

 \begin{figure}[ht]
 \begin{center}
 \includegraphics[width=0.7\textwidth]{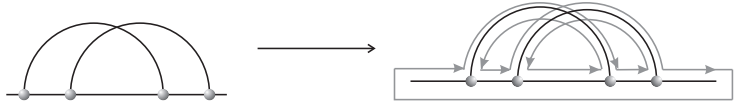}
 \end{center}
 \caption{\small The fattening.
 }\label{F:inflate}
 \end{figure}

A fat graph $\mathbb{G}$ can be embedded in a compact orientable surface $F(\mathbb{G})$, 
such that its complement is a disjoint union of simply connected domains (called the faces
or boundary components) 
and considered up to oriented homeomorphism. We can define the genus $g$ of the fat graph by 
the genus of the surface. Clearly, $F(\mathbb{G})$ contains $G$ as a deformation retract and 
each $\mathbb{G}$ represents a cell-complex \citet{Massey:69} over $F(\mathbb{G})$, see 
Fig.~\ref{F:fat}.

 \begin{figure}[ht]
 \begin{center}
 \includegraphics[width=\textwidth]{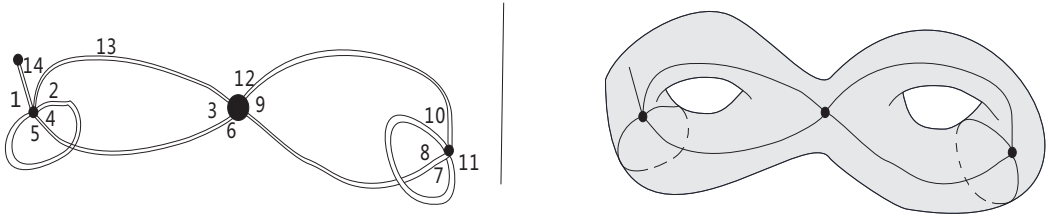}
 \end{center}
 \caption{\small A fatgraph and its embedding.
 }\label{F:fat}
 \end{figure}

A diagram $G$ hence determines a unique surface $F(\mathbb{G})$. 
Equivalence of simplicial and singular homology implies that
Euler characteristic $\chi$  and genus  $g$ of $F(\mathbb{G})$ 
are independent of the choice of the cell-complex $\mathbb{G}$
and given by $\chi = v-e+r$ and $g=1-\frac{1}{2}\chi$, where 
$v,e,r$ are the number of discs, ribbons and boundary components
in $\mathbb{G}$, respectively.

Without affecting topological type of the surface, one may collapse each backbone 
to a single vertex with the induced fattening called the polygonal model of the RNA, 
see Fig.~\ref{F:inflation}.
 \begin{figure}[ht]
 \begin{center}
 \includegraphics[width=0.9\textwidth]{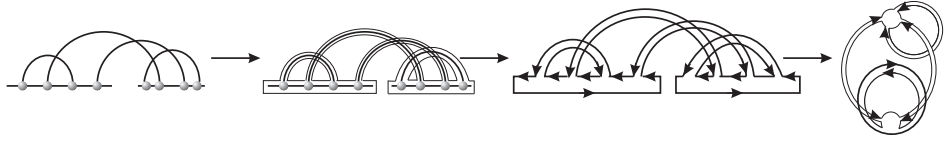}
 \end{center}
 \caption{\small Inflation of a $2$-backbone diagram and collapse of its $2$ backbones to  two vertices.
 }\label{F:inflation}
 \end{figure}
 
This backbone-collapse preserves orientation, Euler characteristic and genus. It is 
reversible by inflating each vertex to form a backbone. Using the 
collapsed fat graph representation, we see that for a connected 
diagram over $b$ backbones, the genus $g$ of the surface is 
determined by the number $n$ of arcs and the number 
$r$ of boundary components, namely, $2-2g-r=v-e=b-n$.

Boundary components are in the following oftentimes referred to as loops. 
We distinguish the following loop-types: 
\begin{itemize}
\item {\it hairpin loops}, which are boundary components of length one, 
\item {\it interior loops}, which are boundary components of length two,
\item {\it multi-loops}, which are boundary components of length two $\geq 3$. 
\end{itemize}
We furthermore distinguish within multiloops {\it pseudoknot loops}, which are
multi-loops containing some crossing arcs in the diagram representation.
In interaction structures, we shall distinguish 
{\it $\alpha$-loops} and {\it $\beta$-loops}, {\it $\alpha$ stacks} and {\it $\beta$ stacks},
depending on whether or not they contain only arcs whose endpoints are on one backbone.
\newpage
\section{Shapes}\label{S:3}

A diagram is called a preshape if it contains neither $1$-arcs (the arcs has the form 
$(i,i+1)$) nor stacks (parallel arcs) and isolated vertices (the vertices not paired).  
A preshape without a rainbow is called pure.  A shape is then obtained from a pure preshape 
by adding a rainbow for every backbone, see Fig.~\ref{F:shapegenus10}. 
  \begin{figure}[ht]
  \begin{center}
  \includegraphics[width=0.9\textwidth]{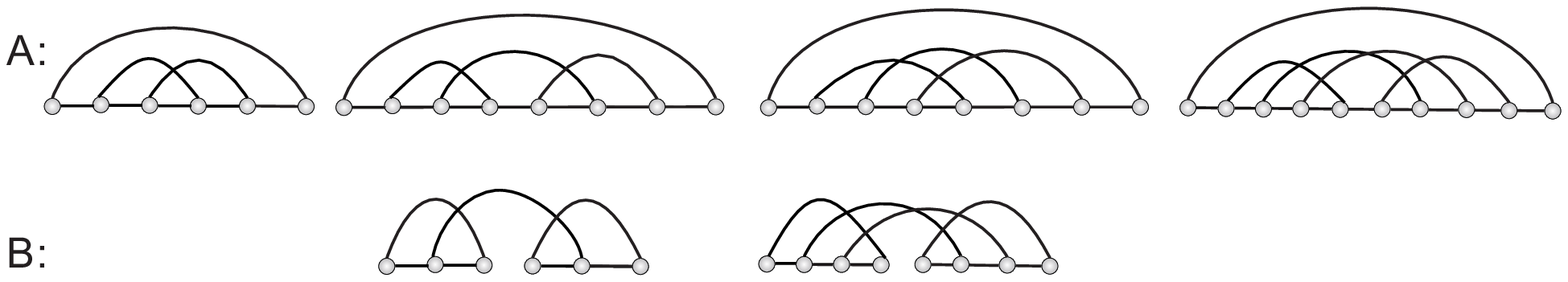}
  \end{center}
  \caption{\small 
   $A:$ the $4$ shapes of genus $1$ over one backbone.
    $B:$ the $2$ shapes of genus $0$ over two backbones.
  }\label{F:shapegenus10}
  \end{figure}
We can obtain the shape of a planted diagram by iterating the following two steps: first 
collapse each stack into an arc, secondly remove all the $1$-arcs and isolated vertices. 
Iteration generates an unique diagram without stacks, $1$-arcs and isolated vertices, 
see Fig.~\ref{F:shape}.
 
 \begin{figure}[ht]
 \begin{center}
 \includegraphics[width=0.8\columnwidth]{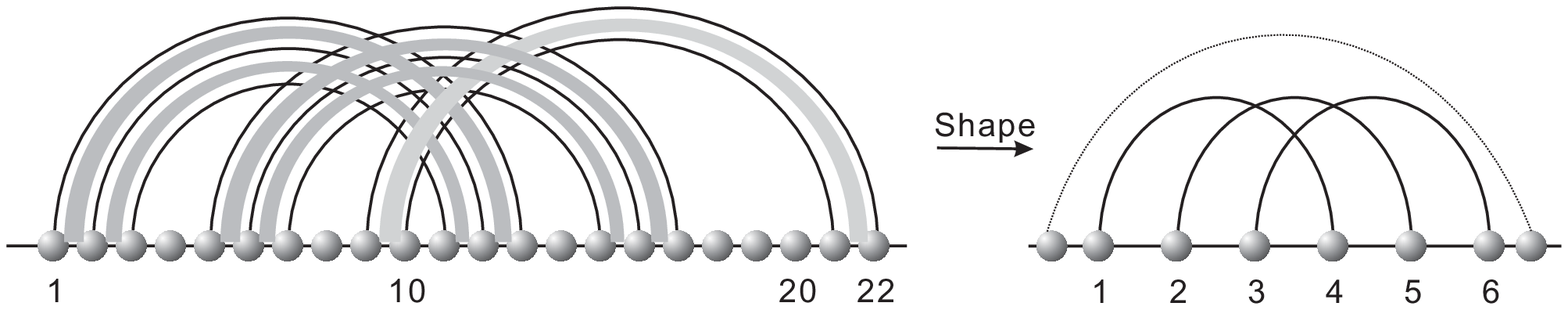}
 \end{center}
 \caption{\small From a diagram to a shape by removing all $1$-arc and parallel arcs.
 The dashed arc is a rainbow, displayed together with a nested preshape.
 }
 \label{F:shape}
 \end{figure}
For fixed genus $g$, there exist only finitely many shapes over $1$ backbone ($2$ backbones) 
\citet{ Fenix:t2,Reidys:11a}. 
\begin{lemma}\label{L:finite}
Given a $1$-backbone shape of genus $g$ with n edges, we have $2g+1 \leq n\leq 6g-1$. 
Therefore, for fixed genus $g$, there exist only finitely many shapes.
\end{lemma}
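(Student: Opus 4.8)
The plan is to reduce the entire statement to a bound on the number $r$ of boundary components (loops) of the shape, and then to control $r$ through the loop-structure forced by the defining properties of a shape. First I would invoke the genus relation recorded above for a connected diagram over $b$ backbones, namely $2-2g-r=b-n$. Specializing to a planted one-backbone shape ($b=1$, with $n$ counting all arcs, the rainbow included) this reads $n=2g+r-1$. Thus both desired inequalities are equivalent to the single two-sided bound $2\le r\le 4g$: the left inequality yields $n\ge 2g+1$ and the right yields $n\le 6g-1$.

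For the upper bound I would argue through a side-counting identity in the collapsed (polygonal) model, where the backbone is a single vertex and each of the $n$ arcs is a loop contributing exactly two sides; hence the boundary walks use $2n$ arc-sides in total, so $\sum_{\text{loops}}\ell_i=2n$, where $\ell_i$ is the length of the $i$-th loop. The key structural claim is that a one-backbone shape has exactly one loop of length $1$, the outer loop bounded by the outer side of the rainbow, while every remaining loop has length at least $3$; equivalently, a shape has no hairpin loop other than the outer one and no interior loop. Granting this, $2n=\sum_i\ell_i\ge 1+3(r-1)=3r-2$, and substituting $n=2g+r-1$ gives $4g+2r-2\ge 3r-2$, i.e.\ $r\le 4g$, with equality precisely when all inner loops have length $3$.

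The lower bound is then immediate: the outer loop has length $1<2n$, so it cannot exhaust all arc-sides and at least one further loop must exist, whence $r\ge 2$. A nonempty shape moreover forces $g\ge 1$ (a genus-$0$ non-crossing pure preshape with no isolated vertices would have an innermost $1$-arc, which is forbidden), so the bounds are vacuous exactly in the degenerate genus-$0$ case of a lone rainbow, which I would treat separately.

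The main obstacle is the structural claim about loop lengths, i.e.\ translating the combinatorial defining properties of a shape---no $1$-arcs, no stacks, no isolated vertices---into the statement that the only short loop is the single length-$1$ outer loop. I would prove this by analysing the boundary walk of a hypothetical short inner loop: a length-$1$ inner loop is bounded by a single arc $(i,j)$ whose enclosed backbone carries no further arc-endpoints, so with no isolated vertices it forces $j=i+1$, a forbidden $1$-arc; a length-$2$ loop is bounded by two arcs whose two connecting backbone segments carry no endpoints, so with no isolated vertices the two arcs are parallel and form a forbidden stack. The delicate point is to verify that this dichotomy is exhaustive, in particular that a crossing pair of arcs never produces a length-$2$ loop, so that eliminating $1$-arcs and collapsing stacks indeed removes every inner loop of length below $3$.
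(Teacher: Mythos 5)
Your proof follows essentially the same route as the paper's: the Euler relation $n=2g+r-1$, the side-counting identity $2n=\sum_l l v_l$ in the polygonal model, and the observations $v_1=1$ (the plant) and $v_2=0$ to get $2n\ge 3r-2$ and hence $2\le r\le 4g$. You in fact supply more detail than the paper on why inner loops of length $1$ or $2$ are excluded (the paper simply asserts this ``by the definition of shapes''), so the argument is correct and matches the published proof.
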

\begin{proof}
First note that if there is more than one boundary component, then there
must be an arc with different boundary components on its two sides and removing 
this arc decreases $r$ by exactly one while preserving $g$ since the number of 
arcs is given by $n=2g+r-1$. Furthermore, if there are $v_l$
boundary components of length $l$ in the polygonal model, then $2n =\sum_l l v_l$
since each side of each arc is traversed once by the boundary
(including the plant). 
For a shape, $v_1=1$, since the plant gives  the only boundary component of length $1$. 
$v_2=0$ by the definition of shapes.
It therefore follows that $2n =\sum_l l v_l\geq 3(r-1)+1$,  
so $2n=4g+2r-2\geq 3r-2,$ i.e., $4g \geq r$. 
Thus, we have $n=(2g+4g-1)=6g-1$, i.e., any shape can
contain at most $6g-1$ arcs. 
The lower bound $2g+1$ follows directly from $n=2g+r-1$ since $r \geq 2$.

For fixed genus $g$, the number of arcs in the shape is at most $6g-1$, the second assertion
follows.
\end{proof}

The lemma \ref{L:finite} implies that the generating function for a $1$-backbone shapes 
of genus $g$ is a polynomial. For example, for the shapes over $1$ backbone with genus 
$1$ to $3$, we have
\begin{eqnarray*}
S_1(z)& =& z^3+2z^4+z^5,\\
S_2(z)& =& 21 z^5+189 z^6+651 z^7+1134 z^8+1071 z^9+525 z^{10}+105 z^{11},\\
S_3(z)&=& 1485 z^7+26928 z^8+198451 z^9+808478 z^{10}+2054305 z^{11}\\
& &+3442340 z^{12} +3883363 z^{13}+2928926 z^{14}+1419418 z^{15} \\
& &+400400 z^{16}+50050 z^{17}
\end{eqnarray*}
Explicit formulas for the coefficients of the shape-polynomial of arbitrary fixed genus have
been given in \citet{fenix-shape}. There the {\it Poincar\'{e} dual} of shapes, a unicellular 
map was constructed and a construction of \citet{Chapuy:11} is refined to slice such a map 
into a tree with certain labeled vertices. The latter represent the blueprint to rebuild the
original unicellular map and the shape, respectively.

\begin{theorem}\citet{fenix-shape}\label{T:1.1}
The shape polynomial of genus $g$ is given by
\begin{equation}
S_g(z)= \sum_{t=1}^{g} \kappa_t^{(g)} z^{2g+t-1}(1+z)^{2g+t-1},
\end{equation}
where $\kappa_t^{(g)}=a_{t-1}^{(g)} {\rm Cat}(2g+t)$ and
\begin{equation}\label{E:at}
a_t^{(g)}= \sum_{0=g_0<g_1<\cdots < g_r=g \atop 0=t_0=t_1\le t_2\le \cdots \le t_r=r-t}
\prod_{i=1}^r \frac{1}{2g_i} {2g+t-(2g_{i-1}+(i-1))+t_i \choose 2(g_i-g_{i-1})+1}.
\end{equation}
\end{theorem}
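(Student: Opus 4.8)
The plan is to prove the formula by transporting the enumeration of $1$-backbone shapes to a restricted family of unicellular maps and then counting those maps by an iterated genus-reduction. First I would pass to the Poincar\'e dual of the fat graph of a shape. After collapsing the backbone, a shape of genus $g$ with $n$ arcs has a single disc, $n$ ribbons, and, by the relation $2-2g-r = v-e = b-n$ with $b=1$, exactly $r = n-2g+1$ boundary components. Dualizing interchanges discs and boundary components, so the dual is a \emph{unicellular} map (one face) of the same genus $g$, with $n$ edges and $r$ vertices, in which the degree of a vertex equals the length of the corresponding loop. The shape axioms translate verbatim: the unique hairpin (the plant) becomes a single distinguished degree-$1$ vertex, which I take as the root; the absence of interior loops ($v_2=0$) forbids degree-$2$ vertices; all remaining multi-loop vertices have degree $\ge 3$. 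Thus $S_g(z)$ is the generating polynomial, by number of edges, of rooted unicellular maps of genus $g$ whose non-root vertices all have degree at least three, and the bounds of Lemma~\ref{L:finite} reappear as $2n = 4g+2r-2 \ge 1 + 3(r-1)$.

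Next I would split the count into a rigid \emph{core} and a layer of optional arc-insertions; this is what manufactures the envelope $z^{2g+t-1}(1+z)^{2g+t-1}$. After recording, for each non-root vertex, the excess of its degree over the minimal value three, one checks that fixing the core-type (the index $t$) leaves exactly $2g+t-1$ mutually independent binary choices, each either leaving the core alone or inserting one additional arc and hence weighted by $(1+z)$; the binomial expansion of these choices is precisely the stated envelope, with $\kappa_t^{(g)}$ the weight of the cores of type $t$. The genus-$1$ polynomial $z^3+2z^4+z^5$ of the examples serves as a consistency check on this split.

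The factor $\kappa_t^{(g)} = a_{t-1}^{(g)}\,\mathrm{Cat}(2g+t)$ is then obtained by enumerating the cores, and here I would invoke and refine the construction of \citet{Chapuy:11}. A rooted unicellular map of genus $h$ carries a canonical family of $2h$ trisections; slicing at a trisection splits one vertex into $2i+1$ vertices and drops the genus by $i$, while the inverse operation identifies $2i+1$ chosen vertices and raises the genus by $i$. Iterating the slicing down to genus $0$ turns the core into a plane tree --- whence the factor $\mathrm{Cat}(2g+t)$ --- decorated by the record of the successive merges. That record is exactly an admissible chain $0 = g_0 < g_1 < \cdots < g_r = g$ together with the choice of vertices identified at each step: the $i$-th step raises the genus by $g_i-g_{i-1}$, so it merges $2(g_i-g_{i-1})+1$ vertices, which can be chosen in $\binom{2g+t-(2g_{i-1}+(i-1))+t_i}{2(g_i-g_{i-1})+1}$ ways, and the $\tfrac{1}{2g_i}$ accounts for the $2g_i$ trisections present at the intermediate genus $g_i$. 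Multiplying over $i$ and summing over all admissible chains, with the auxiliary indices constrained by $0 = t_0 = t_1 \le t_2 \le \cdots \le t_r = r-t$, yields $a_{t-1}^{(g)}$ as in \eqref{E:at}.

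The main obstacle is the bookkeeping that makes this slicing compatible both with the degree-$\ge 3$ constraint and with the core/inflation split. Chapuy's identity is stated for unicellular maps with no restriction on vertex degrees, so the delicate part is to \emph{refine} it: I must verify that the degree constraint on non-root vertices is preserved (or exactly corrected for) under each trisection cut, and that the indices $t_i$ faithfully encode the interplay between vertex-merging and the insertion slots. Establishing that the composite map --- dualize, then inflate, then slice --- is a genuine bijection onto plane trees decorated by admissible chains, so that distinct chains rebuild distinct shapes and every shape of genus $g$ is reached exactly once, is the crux; once that is in place, collapsing the decorated-tree count into the product \eqref{E:at} is a routine binomial summation.
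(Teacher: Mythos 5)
This theorem is not proved in the paper at all: it is imported verbatim from \citet{fenix-shape}, and the only thing the paper says about its proof is the one-sentence description that the Poincar\'e dual of a shape is a unicellular map and that a construction of \citet{Chapuy:11} is refined to slice such a map into a tree with labeled vertices. Your outline matches that description faithfully --- dualize to a rooted unicellular map with non-root vertices of degree $\ge 3$, split off an envelope $(1+z)^{2g+t-1}$, and count cores by iterated trisection-slicing down to plane trees --- so you have correctly identified the intended route. But as it stands the proposal is a programme rather than a proof: every step that actually carries the content of the theorem is deferred. Concretely, (i) you assert that fixing the core-type $t$ leaves $2g+t-1$ independent ``insert an arc or not'' choices, but you do not say what these insertions are on the shape side, and naively doubling an arc of a shape creates a stack, which shapes forbid --- so the existence of this clean core/envelope split is exactly what needs an argument; (ii) the Catalan factor is justified only by ``slicing down to genus $0$ turns the core into a plane tree,'' without explaining why that tree has $2g+t$ edges, why the degree-$\ge 3$ constraint disappears at genus $0$, or where the index shift $\kappa_t^{(g)}=a_{t-1}^{(g)}\mathrm{Cat}(2g+t)$ comes from; (iii) the combinatorial meaning of the auxiliary indices $t_i$ and the constraint $0=t_0=t_1\le\cdots\le t_r=r-t$ is postulated, not derived. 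You yourself label the compatibility of Chapuy's slicing with the degree constraint ``the crux''; since Chapuy's identity is stated for unrestricted unicellular maps, that refinement is the theorem, and leaving it open leaves the proof open.

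One smaller point: the consistency check you invoke actually fails as stated. For $g=1$ the formula gives $\kappa_1^{(1)}z^2(1+z)^2=z^2+2z^3+z^4$, whereas the paper lists $S_1(z)=z^3+2z^4+z^5=z^3(1+z)^2$; the two differ systematically by one power of $z$ (the same holds at $g=2$), reflecting whether the plant arc is counted. A careful proof would have to fix this convention explicitly, and noticing the mismatch would have been a useful sanity check on your edge accounting rather than a confirmation of it.
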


\citet{fenix-shape} furthermore derives from the underlying bijections a uniform 
generation algorithm \textit{UniformShape} for shapes of a fixed genus $g$, which
has linear time complexity.
   
\citet{thomas} studies the sequence $(\kappa_t^{(g)})_{t=1}^{g}$, see Tab.~\ref{T:kappa},
which emerged originally in the computation of the virtual Euler characteristic of a 
curve \citet{Harer}. 
\citet{thomas} shows that $(\kappa_t^{(g)})_{t=1}^{g}$ is log-concave 
and hence unimodal and derives 
$$
\kappa_t^{(g)}=\frac{(2(2g+t-1))!}{2^{2g}(2g+t-1)!
\sum_{\gamma\vdash g}\prod_i m_i ! (2i+1)^{m_i}}.
$$
Furthermore,
\begin{proposition}\citet{thomas}\label{P:recursion}
$\kappa_t^{(g)}$ satisfies
\begin{small}
 $$
(2g+t)\kappa_t^{(g)} \!=\! (2(2g+t)-3)(2(2g+t)-5)\left((2g+t-2)\kappa_t^{(g-1)}+
2(2(2g+t)-7)\kappa_{t-1}^{(g-1)}\right),
$$ 
\end{small}
where $\kappa_1^{(1)}=1$, $\kappa_t^{(g)}=0$, if $t<1$ or $t>g$. 
\end{proposition}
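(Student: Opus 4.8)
The plan is to derive the recursion by substituting the closed-form expression for $\kappa_t^{(g)}$ recorded just above into both sides and checking that the factorial quotients telescope. Write $N=2g+t$, so that the closed form reads $\kappa_t^{(g)}=(2N-2)!\,/\,(2^{2g}(N-1)!\,D_{g,t})$, where $D_{g,t}$ denotes the denominator sum attached to $\kappa_t^{(g)}$. The key structural observation is that the two terms occurring on the right-hand side, $\kappa_t^{(g-1)}$ and $\kappa_{t-1}^{(g-1)}$, have $N$ replaced by $N-2$ and $N-3$ respectively, so all three numerators are factorials whose arguments differ only by fixed small constants; this is precisely what makes the quotients collapse to polynomials in $N$.

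Concretely, I would first clear the powers of two: the children carry $2^{2(g-1)}=2^{2g}/4$, so after multiplying through, every term shares the common denominator $2^{2g}$. I would then use the telescoping identities $(2N-2)!/(2N-8)!=\prod_{j=2}^{7}(2N-j)$ and $(N-1)!/(N-4)!=(N-1)(N-2)(N-3)$, together with the even-factor extractions $2N-2=2(N-1)$, $2N-4=2(N-2)$, $2N-6=2(N-3)$. Pulling the common block $(2N-8)!/(2^{2g}(N-4)!)$ out of all three $\kappa$-terms and cancelling the matching factors $(N-1)(N-2)(N-3)$ puts both sides on the same footing. Inside the bracket the factor $2N-6=2(N-3)$ cancels against $(N-3)$ in the first summand, which is exactly why the prefactor $2(2N-7)$ is attached to the second summand so that the two align; the residual scalar coefficients $(2N-3)(2N-5)(2N-7)$ then match on both sides.

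After these cancellations the whole recursion is equivalent to a single relation among the denominators, namely $N/D_{g,t}=(N-2)/D_{g-1,t}+1/D_{g-1,t-1}$, or equivalently the linear recursion $N\lambda_{g,t}=(N-2)\lambda_{g-1,t}+\lambda_{g-1,t-1}$ for the reciprocals $\lambda_{g,t}=1/D_{g,t}$. Verifying this identity for the denominators is the real content and the step I expect to be the main obstacle: it is not a formal cancellation, and a naive peeling of a single part off the partitions counted by $D_{g,t}$ does not close up cleanly because of the multiplicity weights $m_i!$. I would therefore establish it either by the explicit evaluation of the denominator sums, inducting on $g$ while tracking the genus and the part-number parameters separately, or, more conceptually, by reading it off the Harer--Zagier-type genus recursion satisfied by the underlying unicellular maps, i.e.\ the Poincar\'{e} duals of shapes constructed in \citet{fenix-shape} via the slicing of \citet{Chapuy:11}. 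The boundary conventions $\kappa_t^{(g)}=0$ for $t<1$ or $t>g$ cover the degenerate cases in which one denominator is vacuous, and $\kappa_1^{(1)}=1$ pins down the normalization.
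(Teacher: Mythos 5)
First, a point of orientation: the paper does not prove this proposition at all --- it is imported from \citet{thomas}, with the remark that \citet{chekhov} obtains the same recursion from matrix models --- so there is no in-paper argument to compare yours against. Judged on its own terms, your factorial bookkeeping is correct: with $N=2g+t$ the three $\kappa$'s carry the parameters $N$, $N-2$, $N-3$; after extracting the even factors $2N-2=2(N-1)$, $2N-4=2(N-2)$, $2N-6=2(N-3)$ and pulling out the common block $(2N-8)!/\bigl(2^{2g}(N-4)!\bigr)$ together with $(2N-3)(2N-5)(2N-7)$, the recursion does collapse to the single relation $N\lambda_{g,t}=(N-2)\lambda_{g-1,t}+\lambda_{g-1,t-1}$. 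The problem is that you stop exactly there. You yourself flag this relation as ``the real content'' and then offer only two candidate strategies (induction on the partition sums, or importing a Harer--Zagier/Chapuy-type genus recursion for unicellular maps) without carrying either out. As written, the proposal is a correct reduction plus a to-do list, not a proof.

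Moreover, the reduction as you literally state it targets a false identity, because the displayed closed form is garbled and you have taken it at face value. If $D_{g,t}$ really were the single sum $\sum_{\gamma\vdash g}\prod_i m_i!\,(2i+1)^{m_i}$ and $\lambda_{g,t}=1/D_{g,t}$ its reciprocal, the numbers would not even match Tab.~\ref{T:kappa}: the tabulated value $4660227$ for $g=4$ forces the ``denominator'' to be $1050/71$, which is not a partition sum, and the relation $N/D_{g,t}=(N-2)/D_{g-1,t}+1/D_{g-1,t-1}$ fails at $(g,t)=(4,2)$, where the left side is $10/71$ while the right side is $8/15+1/7=71/105$. The closed form only reproduces the table if one reads it as $\kappa_t^{(g)}=\tfrac{(2N-2)!}{2^{2g}(N-1)!}\sum_{\gamma}\prod_i\tfrac{1}{m_i!\,(2i+1)^{m_i}}$, the sum running over partitions $\gamma$ of $g$ with exactly $t$ parts; with $\lambda_{g,t}$ equal to that sum of reciprocals, the relation $N\lambda_{g,t}=(N-2)\lambda_{g-1,t}+\lambda_{g-1,t-1}$ does check out (e.g.\ $10\cdot\tfrac{71}{1050}=8\cdot\tfrac{1}{15}+\tfrac{1}{7}$) and is linear in the partition data, so a ``peel off a part'' argument has a chance. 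You would need both to correct the reading of the closed form and to actually prove the resulting partition identity --- the $m_i!$ multiplicities are indeed the obstruction you anticipate --- before this constitutes a proof.
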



The above recursion has also been derived by \citet{chekhov} using matrix models.
\newpage
\section{Shapes over two backbones}\label{S:4}

In this section, we study shapes over two backbones. 
Our main observation is that shapes over two backbones
correspond to particular shapes over one backbone with topological
genus increased by one.
 
We denote a shape over one backbone by $(B,\alpha)$, 
where $$B:=[R_1,1,2, \cdots 2n, S_1]$$ is the sequence of 
vertices along the backbone and $\alpha$ is a fixed-point 
free involution, which contains $(R_1,S_1)$ as one cycle (rainbow). 
$\alpha$-cycles represent edges and $(R_1,S_1)$ is the plant.

We shall now distinguish two types of shapes. A shape is an $A$-shape 
if the vertex following $\alpha(1)$ is paired with the last vertex 
before $S_1$ and a $B$-shape, otherwise, see Fig.~\ref{F:fig13}.
 \begin{figure}[ht]
 \begin{center}
 \includegraphics[width=0.8\textwidth]{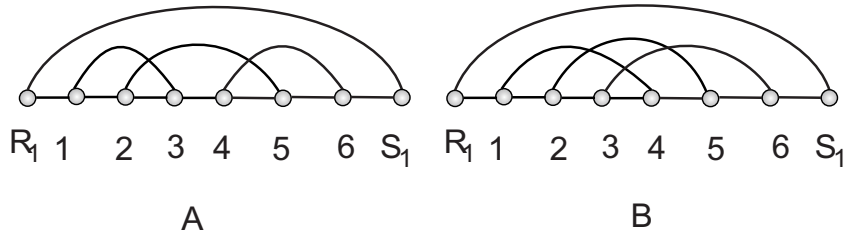}
 \end{center}
 \caption{ $A$-shapes ($\alpha(1)+1=4$ is paired with $6$) and $B$-shapes 
( $\alpha(1)+1=5$ is not paired with $6$). }\label{F:fig13}
 \end{figure}
Let the set of $A$- and $B$-shapes having $n$ edges and genus $g$ be denoted by
$\mathcal{A}_g(n)$ and $\mathcal{B}_g(n)$, respectively. Furthermore, let
$\mathcal{A}_g=\bigcup_n\mathcal{A}_g(n)$ and 
$\mathcal{B}_g=\bigcup_n\mathcal{B}_g(n)$, 
$\mathcal{S}_g(n)=\mathcal{A}_g(n) \bigcup \mathcal{B}_g(n)$.
\begin{lemma}\label{L:bi}
We have a bijection:
$$
\theta: \mathcal{A}_g(n+2) \longrightarrow \mathcal{B}_g(n+1),
$$
i.e.~there exists a pairing $(x,\theta(x))$ associating to each $A$-shape
and its unique $B$-shape. In particular, 
$$
\mathcal{S}_g=\mathcal{A}_g\dot\cup \mathcal{B}_g
$$
and $\vert \mathcal{S}_g\vert/2=\vert \mathcal{A}_g\vert$.
\end{lemma}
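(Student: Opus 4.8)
The plan is to treat the two assertions separately. The decomposition $\mathcal{S}_g=\mathcal{A}_g\dot\cup\mathcal{B}_g$ is immediate from the definitions: every shape $(B,\alpha)$ either does or does not satisfy ``the vertex following $\alpha(1)$ is paired with the last vertex before $S_1$'', so the $A/B$ dichotomy is a genuine partition of $\mathcal{S}_g(n)$ for each $n$, hence of $\mathcal{S}_g$. All the content lies in the bijection $\theta$, from which $|\mathcal{A}_g|=|\mathcal{B}_g|$, and therefore $|\mathcal{S}_g|/2=|\mathcal{A}_g|$, will follow by summation.

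I would build $\theta$ as an explicit arc-surgery and exhibit its inverse. Given an $A$-shape $x$ with $\alpha(1)=a$ and last real vertex $N$, the defining property is that $(a+1,N)$ is an arc; let $\theta$ delete this arc together with its endpoints $a+1$ and $N$ and relabel the remaining vertices order-preservingly. Conversely, for a $B$-shape $y$ with $\alpha(1)=a$ and last real vertex $M$, let $\psi$ insert a new vertex $u$ immediately after $a$ and append a new last vertex $w$, joined by the arc $(u,w)$. By inspection $\theta$ and $\psi$ are mutually inverse as operations on diagrams: $\psi$ reinstates exactly the arc and endpoints that $\theta$ deletes, and vice versa. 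The whole problem thus reduces to checking that $\theta$ maps $\mathcal{A}_g(n+2)$ into $\mathcal{B}_g(n+1)$ and $\psi$ maps $\mathcal{B}_g(n+1)$ into $\mathcal{A}_g(n+2)$; mutual inverseness then yields the bijection.

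Here the no-stack axiom of shapes does the essential combinatorial work. For the \emph{type}: after $\theta$ the new vertex following $\alpha(1)$ is the old $a+2$ and the new last vertex is the old $N-1$, and these are paired only if $(a+2,N-1)$ was an arc of $x$ — impossible, since $(a+1,N)$ is an arc and the two would form a stack; hence $\theta(x)$ is a $B$-shape. Dually, $\psi(y)$ is an $A$-shape by construction, and it is a legal shape precisely because $y\in\mathcal{B}$: the only stack the inserted arc $(u,w)$ could create would require $y$'s vertices $a+1$ and $M$ to be paired, which is exactly the $A$-condition that $y$ fails; and $(u,w)$ is not a $1$-arc because a shape never pairs its first real vertex with its last (that arc would be parallel to the plant), forcing $a<M$. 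The remaining axioms (no other $1$-arcs, no isolated vertices, no further stacks) reduce in each case to the same axioms for $x$ resp.\ $y$, since the only new backbone adjacencies created involve $a=\alpha(1)$, whose partner $1$ lies far to the left, or the plant end — a routine check.

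The genuinely topological step is genus preservation, which I expect to be the main obstacle to state cleanly. After collapsing each backbone to a disc one has $v=1$ and $2-2g-r=v-e$, i.e.\ $e=2g+r-1$ as in Lemma~\ref{L:finite}; since $\theta$ removes exactly one ribbon while keeping $v=1$, genus is preserved iff the deletion lowers $r$ by one, i.e.\ iff the two sides of the special arc lie on \emph{different} boundary components. I would prove this by the one-vertex boundary trace: identifying the darts with $\{0,1,\dots,N+1\}$ where $0=R_1$ and $N+1=S_1$, the boundary components are the orbits of $\omega=\sigma\alpha$ with $\sigma$ the cyclic successor. Using $\alpha(1)=a$, the $A$-shape relation $\alpha(a+1)=N$, and the plant $\alpha(N+1)=0$, one gets $\omega(1)=a+1$, $\omega(a+1)=N+1$, $\omega(N+1)=1$, so $(1\ a{+}1\ N{+}1)$ is a complete boundary component; since $N\notin\{1,a+1,N+1\}$, the darts $a+1$ and $N$ of the special arc lie on distinct components. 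Hence $\theta$ drops $r$ by one and preserves $g$, and genus preservation for $\psi$ is inherited from $\theta\circ\psi=\mathrm{id}$. Finally $|\mathcal{A}_g(m)|=|\mathcal{B}_g(m-1)|$ for all $m$ sums (finitely, by Lemma~\ref{L:finite}) to $|\mathcal{A}_g|=|\mathcal{B}_g|$, whence $|\mathcal{S}_g|=|\mathcal{A}_g|+|\mathcal{B}_g|=2|\mathcal{A}_g|$.
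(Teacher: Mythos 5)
Your proof is correct and follows essentially the same route as the paper: $\theta$ deletes the arc $(\alpha(1)+1,N)$ with its endpoints, the inverse reinserts it, the $A$-condition yields a length-three boundary component through the rainbow, $(1,\alpha(1))$ and the special arc, and Euler's formula then gives genus preservation. Your write-up is in fact somewhat more careful than the paper's at two points it leaves implicit — the explicit permutation computation showing the two sides of the special arc lie on distinct boundary components, and the no-stack argument for why $\theta(x)$ lands in $\mathcal{B}$ rather than $\mathcal{A}$.
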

\begin{proof}
Let $\Gamma=([R_1,1,2, \cdots 2n+1,2n+2, S_1], \alpha)$ be an $A$-shape having $n+2$ arcs,
containing the arc $(\alpha(1)+1, 2n+2)$. Since $\Gamma$ is a shape, there are no nested 
arcs or $1$-arcs, whence removal of $(\alpha(1)+1, 2n+2)$ maps an $A$-shape into a 
$B$-shape.

 Furthermore, as an $A$-shape, $\Gamma$ has a boundary component of size three, 
$\gamma_3$, traversing the sides of the rainbow, $(1,\alpha(1))$ and $(\alpha(1)+1, 2n+2)$. 
Let $\theta$ be the mapping defined by removing the arc $(\alpha(1)+1, 2n+2)$ together
with its incident vertices and subsequently relabeling of the remaining vertices.
Then $\theta$ decreases both: the number of boundary components, $r$ as well as 
the number of arcs $n+2$ by $1$. To see this we note that $(\alpha(1)+1, 2n+2)$ is
traversed by two distinct boundary components, $\gamma,\gamma_3$. Removing 
$(\alpha(1)+1, 2n+2)$ consequently merges $\gamma$ and $\gamma_3$, whence 
the number of boundary components decreases by one. Euler's characteristic equation,
$2-2g-r=1-(n+2)$ shows that $\theta$ preserves $g$. See Fig.~\ref{F:fig14}

 \begin{figure}[ht]
 \begin{center}
 \includegraphics[width=0.8\textwidth]{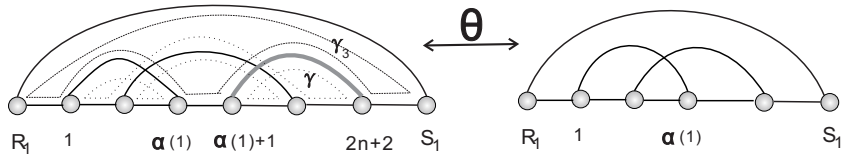}
 \end{center}
 \caption{ $\theta$: removal of $(\alpha(1)+1, 2n+2)$, creates a $B$-shape.}\label{F:fig14}
 \end{figure}

We next specify $\theta^{-1}$. Given $B$-shape having $n+1$ edges and genus $g$, 
we insert an arc with endpoints between $[\alpha(1),\alpha(1)+1]$ and $[2n,S_1]$ 
and subsequently relabel the diagram. This insertion maps any $B$-shape into 
an $A$-shape. Namely, by construction, it creates neither nested arcs nor 
$1$-arcs (the latter would imply that the rainbow has a nested arc).
After relabeling, the inserted arc is incident to $(\alpha(1)+1,2n+2)$ and 
creates a new boundary component, $\gamma_3$, as specified above. 
Euler's characteristic equation then shows that $\theta^{-1}$ does preserve
genus, see Fig.~\ref{F:fig14}.
\end{proof}

Let $\mathcal{Q}_g$ denote the set of shapes over two backbones of genus $g$ and
$\mathcal{S}_g^2$ denote the set of pairs of disconnected $1$-backbone shapes whose
sum of genera equals $g$. Let $\mathcal{Q}'_g=\mathcal{Q}_g\cup\mathcal{S}_g^2$.

\begin{theorem}\label{T:main}
We have the following commutative diagram of bijections
$$
\diagram
 \mathcal{Q}'_g\dline   \rrto^{\eta}   && \mathcal{A}_{g+1}\dline\\
 \mathcal{Q}'_g(n+2) \rrto^{\eta_n} &&  \mathcal{A}_{g+1}(n+3)
\enddiagram
$$
\end{theorem}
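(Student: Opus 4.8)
The plan is to construct the bijection $\eta$ explicitly, verify that it respects the edge-grading and shifts the genus by one, and then read off the inverse $\eta^{-1}$, whose connected/disconnected dichotomy is exactly what forces the domain to be $\mathcal{Q}'_g=\mathcal{Q}_g\cup\mathcal{S}_g^2$. First I would describe the forward map. An element of $\mathcal{Q}'_g$ is either a connected $2$-backbone shape with rainbows $(R_1,S_1),(R_2,S_2)$, or a disconnected pair of $1$-backbone shapes having these as their two plants. In both cases I would concatenate the two backbones into a single linear order (backbone $1$ followed by backbone $2$), reinterpret the two former rainbows $(R_1,S_1)$ and $(R_2,S_2)$ as ordinary internal arcs, and adjoin a single new enclosing rainbow $(R,S)$ as the plant. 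After relabeling, the first internal vertex is old $R_1$, so its partner is old $S_1=\alpha(1)$; then $\alpha(1)+1$ is old $R_2$ and the last vertex before $S$ is old $S_2$, and these are paired. This is precisely the defining condition of an $A$-shape, so $\eta$ lands among the $A$-shapes rather than the $B$-shapes, and surjectivity onto $\mathcal{A}_{g+1}$ will follow once $\eta^{-1}$ is seen to be defined on every $A$-shape.

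Next I would check that $\eta(x)$ is a genuine shape and count edges. The enclosing rainbow is not a $1$-arc, and each former rainbow, now internal, still spans the non-empty block of internal vertices of its backbone (non-empty because a connected $2$-backbone shape has an external arc on each backbone, and each summand of a pair has genus $\geq 1$), so neither becomes a $1$-arc; no stacks or isolated vertices are introduced. Since the two old rainbows become internal arcs and exactly one new rainbow is added, the edge count passes from $n+2$ to $n+3$. This yields the horizontal arrow $\eta_n:\mathcal{Q}'_g(n+2)\to\mathcal{A}_{g+1}(n+3)$, and the square commutes tautologically because the vertical arrows are the inclusions of the edge-graded pieces and $\eta$ adds a fixed single edge.

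The genus shift is the heart of the argument and the step I expect to be the main obstacle; I would handle it by tracking boundary components. In a $2$-backbone shape each plant contributes a length-$1$ boundary component through its outer corner, as in the proof of Lemma~\ref{L:finite}. Merging the backbones and inserting the new rainbow alters the rotation $\rho$ at exactly four half-edges — the two outer plant-corners and the two sides of the new rainbow — and computing the boundary permutation $\beta=\rho\circ\alpha$ on these shows they recombine into one new length-$1$ component (the new plant) together with a length-$3$ component $\gamma_3$ traversing the new rainbow and the two former plants, every other boundary component being left untouched. Hence $r$ is unchanged while $v-e$ drops by $2$, so the Euler characteristic drops by $2$ and the genus rises by one. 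The delicate point, which I would isolate and treat with care, is reconciling the connected and disconnected cases through the change of backbone number: one must apply $2-2g-r=b-n$ across $b:2\to1$, and verify that the genus attached to a disconnected pair in $\mathcal{S}_g^2$ is exactly the one making the image land in $\mathcal{A}_{g+1}$ rather than $\mathcal{A}_g$.

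Finally I would define $\eta^{-1}$ on $Z\in\mathcal{A}_{g+1}(n+3)$: delete the plant $(R_1,S_1)$, cut the backbone between $\alpha(1)$ and $\alpha(1)+1$, and promote the two arcs $(1,\alpha(1))$ and $(\alpha(1)+1,\text{last})$ to the plants of the two resulting backbones. The $A$-shape property guarantees both arcs exist, so $\eta^{-1}$ is total on $\mathcal{A}_{g+1}$. Whether the output is a connected $2$-backbone shape (in $\mathcal{Q}_g$) or splits into two disjoint $1$-backbone shapes (in $\mathcal{S}_g^2$) depends solely on whether any remaining arc joins the two blocks — which is exactly why $\mathcal{Q}'_g$ must be the union of the two families. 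That $\eta$ and $\eta^{-1}$ invert each other is then immediate from the constructions, and restricting to fixed edge-number gives the stated commutative diagram of bijections.
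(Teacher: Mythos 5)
Your proposal is correct and follows essentially the same route as the paper: the same gluing construction (concatenate backbones, demote the two old rainbows to internal arcs, add one enclosing rainbow), the same boundary-component accounting ($r'=r$ because the two old plant boundaries merge into a length-$3$ component $\gamma_3$ while the new plant contributes a fresh length-$1$ boundary), the same Euler-characteristic computation giving $g'=g+1$, and the same inverse (delete the plant, cut between $\alpha(1)$ and $\alpha(1)+1$). You are in fact somewhat more careful than the paper on two points it leaves implicit — verifying that no $1$-arcs or stacks arise so the image is a genuine shape, and noting that the connected/disconnected dichotomy of $\eta^{-1}$ is exactly what forces the domain to be $\mathcal{Q}'_g=\mathcal{Q}_g\cup\mathcal{S}_g^2$ with the genus of a disconnected pair read off from $2-2g-r=b-n$ rather than as the naive sum of component genera.
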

\begin{proof}
Since any $\mathcal{Q}'_g$-diagram has a unique number of arcs it suffices to specify
the bijections $\eta_n$.

An $\mathcal{Q}'_g(n+2)$-element can be denoted by  
$$ 
x=([[R_1,1,2,\cdots, m,S_1],[R_2,m+1,\cdots 2n, S_2]], \alpha),
$$ 
having the rainbows $(R_1,S_1),(R_2,S_2)$.

We define the mapping $\eta_n$ as follows: 
\begin{itemize}
\item first we glue the two backbones into 
$$
[R_1,1,2,\cdots, m,S_1,R_2,m+1,\cdots 2n, S_2],
$$ 
\item secondly we add an new rainbow, 
\item thirdly, we relabel the vertices.
\end{itemize}
This produces a unique backbone 
$$
[R_1,1,2,\cdots,2n-1,2n,2n+1,2n+2,S_1]
$$
and transforms the two rainbows into the new arcs
$$
(R_1,S_1)\mapsto (1,\alpha(1))\quad \text{\rm and }\quad
(R_2,S_2)\mapsto (\alpha(1)+1,2n),
$$ 
respectively. Accordingly, $\eta_n(x)$ is an $A$-shape having $(n+3)$ edges,
 see Fig.~\ref{F:fig15}.

The mapping $\eta_n$ eliminates one backbone, i.e.~$b'=b-1$, generates a 
$\gamma_3$-boundary component merging the two original rainbow-boundaries 
and adds a new rainbow boundary, i.e.~$r'=r$ and adds one edge, i.e.~$n'=n+3$.
In view of $2-2g-r=2-(n+2)$ we obtain
$$ 
2g'= 2 - r - (2-1) + (n+3) = 2(g + 1),
$$
which proves that $\mathcal{A}_{g+1}(n+3)$.

We next construct $\eta_n^{-1}$ as follows:
consider an $A$-shape $y\in \mathcal{A}_{g+1}(n+3)$, then
\begin{itemize}
\item remove the rainbow,
\item cut the backbone between $\alpha(1)$ and $\alpha(1)+1$,
\item relabel the two respective backbones.
\end{itemize}
By construction the edges $(1,\alpha(1)),(\alpha(1)+1, 2n)$ become the rainbows
of the new backbones. 
The mapping $\eta_n^{-1}$ reverses $\eta_n$ and our above accounting of backbones, 
boundary components and edges applies here. Thus $\eta_n^{-1}(y)$ is a $2$-backbone
diagram of genus $g$ having $n+2$ edges, see Fig.~\ref{F:fig15}.
\end{proof}
 \begin{figure}[ht]
 \begin{center}
 \includegraphics[width=0.8\textwidth]{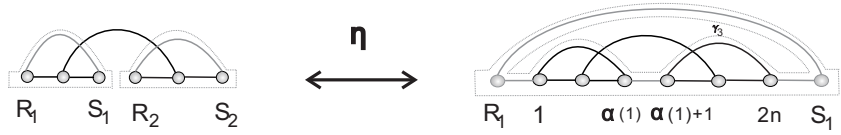}
 \end{center}
 \caption{The mapping $\eta$.}\label{F:fig15}
 \end{figure}
\begin{corollary}
Let $x\in \mathcal{Q}'_g(n+2)$ be a shape over two backbones containing 
$\ell$-multiloops, then $\eta_n(x)\in \mathcal{A}_{g+1}(n+3)$ is an 
$A$-shape over one backbone having $\ell+1$ multi-loops.
\end{corollary}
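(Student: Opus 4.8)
The plan is to track the boundary components (loops) through the bijection $\eta_n$ of Theorem~\ref{T:main} and to show that the only change in the multi-loop census is the appearance of a single new multi-loop, namely the length-three boundary component $\gamma_3$ produced in that proof. Recall that a multi-loop is precisely a boundary component of length $\geq 3$, a hairpin loop has length one, and shapes contain no interior loops (length two). Since $\eta_n$ neither adds nor deletes any arc other than exchanging the two old rainbows for one new rainbow, and otherwise merely glues backbones and relabels, it suffices to compare the two rainbow-boundaries of $x$ with the boundary components $\eta_n$ creates near the new rainbow, while arguing that all remaining loops pass through unchanged. I note that this analysis is insensitive to connectivity, so it covers both $\mathcal{Q}_g$ and the disconnected pairs in $\mathcal{S}_g^2$ uniformly.

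First I would record the loops attached to the plants. By the argument establishing $v_1=1$ in the proof of Lemma~\ref{L:finite}, each plant of a shape contributes exactly one boundary component of length one, traversing the outer side of its rainbow. Hence the two-backbone shape $x$ carries exactly two rainbow-boundaries, each a hairpin loop, and therefore \emph{neither of them is a multi-loop}. Consequently the $\ell$ multi-loops of $x$ are disjoint from the two rainbow-boundaries.

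Next I would invoke the boundary bookkeeping from the proof of Theorem~\ref{T:main}: $\eta_n$ merges the two rainbow-boundaries into a single boundary component $\gamma_3$ traversing the inner side of the new rainbow together with the (former outer) sides of the two arcs $(1,\alpha(1))$ and $(\alpha(1)+1,2n)$ into which the old rainbows are transformed, and it simultaneously adds one new rainbow-boundary of length one; in particular $r'=r$. The new rainbow-boundary is again a hairpin and hence not a multi-loop, whereas $\gamma_3$ has length exactly three and is therefore a multi-loop. Every other boundary component of $x$ involves only arcs that $\eta_n$ preserves, possibly including the inner sides of the two former rainbows, whose local adjacencies are untouched by the gluing and relabeling; hence each such loop reappears in $\eta_n(x)$ with the same length. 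In particular the $\ell$ multi-loops of $x$ are in length-preserving bijection with $\ell$ multi-loops of $\eta_n(x)$.

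Finally I would collect the census: the multi-loops of $\eta_n(x)$ are exactly the $\ell$ inherited multi-loops together with the newly created $\gamma_3$, while the two old hairpins disappear and the only new hairpin is the new rainbow-boundary. This yields $\ell+1$ multi-loops, as claimed. The one step demanding care is the length-preservation claim for the inherited loops: one must verify that turning the two rainbows into the internal arcs $(1,\alpha(1))$ and $(\alpha(1)+1,2n)$ does not lengthen or shorten any boundary component apart from the controlled creation of $\gamma_3$ — equivalently, that the cyclic fattening around the glued cut vertices is exactly the one recorded in the proof of Theorem~\ref{T:main}.
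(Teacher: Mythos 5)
Your proof is correct and takes essentially the same route as the paper's one-sentence argument: $\eta_n$ merges the two rainbow hairpin boundaries and the new rainbow into the length-three boundary component $\gamma_3$, leaving every other loop intact, so exactly one multi-loop is gained. Your write-up merely makes explicit the bookkeeping (the old rainbow boundaries being hairpins rather than multi-loops, and the preservation of all remaining boundary components) that the paper leaves implicit by reference to the proof of Theorem~\ref{T:main}.
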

\begin{proof}
The map $\eta_n$ merges two rainbow-boundary components of $x$ 
and the new rainbow into a multi-loop of length $3$, see Fig.~\ref{F:fig15}.
\end{proof}
\begin{algorithm}[H]
\begin{algorithmic}[1]
\STATE {\tt UniformBi-shape}~($TargetGenus$)
\WHILE { $1$ }
\STATE {$\mathfrak{s}_1\leftarrow {\tt UnifromShape}(TargetGenus +1 )$}
\IF    {$\mathfrak{s}_1$ is type $A$}
\STATE {$\mathfrak{s}_2 \leftarrow {\tt \eta^{-1}(\mathfrak{s}_1)}$}
\ELSE
\STATE { $\mathfrak{s}_2 \leftarrow {\tt \eta^{-1} \theta^{-1} (\mathfrak{s}_1)}$ }
\ENDIF
\IF {{\tt Connection} ($\mathfrak{s}_2$)}
\STATE {\textbf{return} $\mathfrak{s}_2$}
\ENDIF
\ENDWHILE
\caption {\small Uniform genration of shapes over two backbones }
\label{A:bi}
\end{algorithmic}
\end{algorithm}
A first application of Theorem~\ref{T:main} is a uniform generation 
algorithm for shapes over two backbones of fixed topological genus $g$. We show the
pseudocode in Algorithm~\ref{A:bi}.

\begin{corollary}
Algorithm~\ref{A:bi} generates $2$-backbone shapes of genus $g$ uniformly.  
\end{corollary}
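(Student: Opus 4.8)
The plan is to establish uniformity by tracking how the algorithm's acceptance/rejection loop interacts with the uniform generator \texttt{UniformShape} and the bijections $\eta$ and $\theta$. First I would invoke Theorem~\ref{T:main}, which gives a bijection $\eta\colon \mathcal{Q}'_g \to \mathcal{A}_{g+1}$, and Lemma~\ref{L:bi}, which gives a bijection $\theta\colon \mathcal{A}_{g+1}(n+2) \to \mathcal{B}_{g+1}(n+1)$. Reading the pseudocode, the composite map sends a genus-$(g+1)$ one-backbone shape $\mathfrak{s}_1$ to a $\mathcal{Q}'_g$-diagram $\mathfrak{s}_2$: if $\mathfrak{s}_1$ is type $A$ we apply $\eta^{-1}$ directly, and if it is type $B$ we first apply $\theta^{-1}$ to land in $\mathcal{A}_{g+1}$ and then $\eta^{-1}$. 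The key structural fact is that \texttt{UniformShape}$(g+1)$ produces a uniformly random element of $\mathcal{S}_{g+1} = \mathcal{A}_{g+1} \,\dot\cup\, \mathcal{B}_{g+1}$, and by Lemma~\ref{L:bi} the map $\theta$ pairs each $A$-shape with a unique $B$-shape of the appropriate size.

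The heart of the argument is to verify that the map realized by lines 4--8 is itself a bijection from $\mathcal{S}_{g+1}$ onto $\mathcal{Q}'_g$, so that it transports the uniform distribution to a uniform distribution. I would argue this as follows: restricted to the $A$-shapes, the algorithm uses $\eta^{-1}$, which is a bijection onto $\mathcal{Q}'_g$ by Theorem~\ref{T:main}; restricted to the $B$-shapes, it uses $\eta^{-1}\circ\theta^{-1}$, and since both $\theta^{-1}\colon\mathcal{B}_{g+1}\to\mathcal{A}_{g+1}$ and $\eta^{-1}\colon\mathcal{A}_{g+1}\to\mathcal{Q}'_g$ are bijections, the composite is a bijection from $\mathcal{B}_{g+1}$ onto $\mathcal{Q}'_g$. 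Thus each element of $\mathcal{Q}'_g$ has exactly \emph{two} preimages in $\mathcal{S}_{g+1}$ (one of type $A$ and one of type $B$), consistent with $\vert\mathcal{S}_g\vert/2 = \vert\mathcal{A}_g\vert$ from Lemma~\ref{L:bi}. Since \texttt{UniformShape} draws uniformly from $\mathcal{S}_{g+1}$ and every target in $\mathcal{Q}'_g$ receives exactly two equally likely sources, the induced distribution on $\mathcal{Q}'_g$ is uniform before the rejection step.

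The final step is to account for the rejection filter \texttt{Connection}$(\mathfrak{s}_2)$ on line 9. Recall that $\mathcal{Q}'_g = \mathcal{Q}_g \cup \mathcal{S}_g^2$, where $\mathcal{Q}_g$ is the set of \emph{connected} two-backbone shapes we actually want and $\mathcal{S}_g^2$ is the set of disconnected pairs. The predicate \texttt{Connection} accepts precisely the $\mathcal{Q}_g$-elements and rejects the $\mathcal{S}_g^2$-elements. Here I would use the standard fact that rejection sampling from a uniform distribution, conditioned on membership in a fixed subset, yields the uniform distribution on that subset: since the pre-rejection distribution on $\mathcal{Q}'_g$ is uniform, conditioning on the accepting event \texttt{Connection} produces a uniform distribution on $\mathcal{Q}_g$. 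The \texttt{while} loop simply repeats independent draws until acceptance, which realizes exactly this conditioning and terminates almost surely because $\mathcal{Q}_g$ is nonempty.

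The main obstacle I anticipate is the bookkeeping around the index shifts and the disconnected summand $\mathcal{S}_g^2$. One must be careful that \texttt{UniformShape}$(g+1)$ is genuinely uniform over \emph{all} of $\mathcal{S}_{g+1}$ (both types) rather than over $\mathcal{A}_{g+1}$ alone, and that the two-to-one counting of preimages is exactly balanced across types so that no target in $\mathcal{Q}'_g$ is favored; this is where Lemma~\ref{L:bi}'s assertion $\vert\mathcal{S}_g\vert/2 = \vert\mathcal{A}_g\vert$ does the essential work. I would also confirm that \texttt{Connection} correctly distinguishes $\mathcal{Q}_g$ from $\mathcal{S}_g^2$, since the uniformity of the output on the connected shapes $\mathcal{Q}_g$ hinges on the accepting set being exactly $\mathcal{Q}_g$ and on the uniformity being preserved under the bijections at every stage.
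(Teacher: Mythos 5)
Your proposal is correct and follows essentially the same route as the paper: \texttt{UniformShape}$(g+1)$ is uniform on $\mathcal{S}_{g+1}=\mathcal{A}_{g+1}\,\dot\cup\,\mathcal{B}_{g+1}$, the maps $\eta^{-1}$ and $\eta^{-1}\circ\theta^{-1}$ are bijections onto $\mathcal{Q}'_g$ from the two parts, so every target is hit with multiplicity exactly two and the induced distribution is uniform. Your explicit treatment of the \texttt{Connection} rejection step (conditioning the uniform distribution on $\mathcal{Q}'_g$ down to the connected shapes $\mathcal{Q}_g$) is a point the paper's own proof leaves implicit, but the argument is the same.
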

\begin{proof}
{\it UniformShape} \citet{fenix-shape} 
generates $1$-backbone shape uniformly and any $2$-backbone 
shape corresponds to either an $A$-shape via $\eta$, or a $B$-shape via $ 
 \theta  \circ \eta $. Since $A$ and $B$-shapes are generated uniformly, 
any two-backbone shape is generated uniformly with multiplicity two.
\end{proof}

Let $\mathcal{S}^2$ denote the set of pairs of disconnected shapes
whose sum of genera equals $g$ and let $s^2_g(n)$ denote the number
of these shapes having $n$ arcs. Then $S^2_g(z)= \sum_{n} s^2_g(n) z^n$ 
satisfies $S^{2}_g(z)= \sum\limits_1^{g}S_i(z)S_{g+1-i}(z)$.

\begin{theorem}\label{T:shap2}
The polynomial of shapes of genus $g$ over two backbones, 
$Q_g(z)=\sum_{l}q_g(l) z^l$, is given by 
$Q_g(z)=Q'_g(z)-\sum_1^{g}S_i(z)S_{g+1-i}(z)$, 
where 
$$
Q'_g(z)=\frac{S_{g+1}(z)}{(1+z)} = 
\sum\limits_{t=1}^{g+1} \kappa_t^{(g+1)} z^{2g+t+1}(1+z)^{2g+t}.
$$ 
\end{theorem}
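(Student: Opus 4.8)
The plan is to assemble $Q_g(z)$ from the bijections already established, treating the polynomial identity as an immediate consequence of counting arcs under the maps $\eta$ and $\theta$. First I would observe that Theorem~\ref{T:main} gives a bijection $\eta\colon \mathcal{Q}'_g(n+2)\to \mathcal{A}_{g+1}(n+3)$, so at the level of generating polynomials, an element of $\mathcal{Q}'_g$ with $n+2$ arcs corresponds to an $A$-shape of genus $g+1$ with $n+3$ arcs. Writing $A_{g+1}(z)=\sum_n |\mathcal{A}_{g+1}(n)|\,z^n$ for the $A$-shape polynomial, the arc-shift of three-versus-two means $Q'_g(z)=z^{-1}A_{g+1}(z)$, where the single power of $z$ accounts for the one extra arc created by $\eta$. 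So the crux reduces to identifying $A_{g+1}(z)$ in closed form.

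To get $A_{g+1}(z)$, I would invoke Lemma~\ref{L:bi}, which furnishes the bijection $\theta\colon \mathcal{A}_g(n+2)\to\mathcal{B}_g(n+1)$ together with the disjoint decomposition $\mathcal{S}_g=\mathcal{A}_g\,\dot\cup\,\mathcal{B}_g$. Since $\theta$ removes exactly one arc, we have $B_{g}(z)=z^{-1}A_{g}(z)$ at the polynomial level, and hence $S_g(z)=A_g(z)+B_g(z)=A_g(z)(1+z^{-1})=A_g(z)\,(1+z)/z$. Solving gives $A_g(z)=\dfrac{z}{1+z}\,S_g(z)$. Substituting $g\mapsto g+1$ and combining with $Q'_g(z)=z^{-1}A_{g+1}(z)$ yields
$$
Q'_g(z)=\frac{1}{z}\cdot\frac{z}{1+z}\,S_{g+1}(z)=\frac{S_{g+1}(z)}{1+z}.
$$
The explicit form $Q'_g(z)=\sum_{t=1}^{g+1}\kappa_t^{(g+1)}z^{2g+t+1}(1+z)^{2g+t}$ then follows by dividing the shape-polynomial expression of Theorem~\ref{T:1.1} (evaluated at genus $g+1$) by $(1+z)$, which simply lowers each factor $(1+z)^{2(g+1)+t-1}=(1+z)^{2g+t+1}$ to $(1+z)^{2g+t}$ and shifts the leading power $z^{2(g+1)+t-1}=z^{2g+t+1}$ unchanged.

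The final step is to separate the connected shapes $\mathcal{Q}_g$ from the disconnected pairs $\mathcal{S}_g^2$ inside $\mathcal{Q}'_g=\mathcal{Q}_g\cup\mathcal{S}_g^2$. Since this union is disjoint, the polynomials add, giving $Q'_g(z)=Q_g(z)+S_g^2(z)$, and the identity $S_g^2(z)=\sum_{i=1}^{g}S_i(z)S_{g+1-i}(z)$ recalled just before the theorem lets me rearrange to $Q_g(z)=Q'_g(z)-\sum_{i=1}^{g}S_i(z)S_{g+1-i}(z)$, which is the claim. The main obstacle I anticipate is the bookkeeping of arc-counts: one must verify carefully that $\eta$ shifts the number of arcs by exactly three (from $n+2$ to $n+3$) and $\theta$ by exactly one, so that the substitutions $Q'_g(z)=z^{-1}A_{g+1}(z)$ and $B_g(z)=z^{-1}A_g(z)$ carry the correct single factor of $z$; an off-by-one in either shift propagates into a spurious power of $z$ in the closed form. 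Everything else is the formal manipulation of the already-established bijections, so once the arc-shifts are pinned down the identity falls out directly.
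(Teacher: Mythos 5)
Your proposal is correct and follows essentially the same route as the paper's proof: it derives $A_{g+1}(z)=\frac{z}{1+z}S_{g+1}(z)$ from the $\theta$-bijection together with the disjoint decomposition $\mathcal{S}_{g+1}=\mathcal{A}_{g+1}\,\dot\cup\,\mathcal{B}_{g+1}$, divides by $z$ to account for the one extra arc created by $\eta$, and subtracts the disconnected contribution $\sum_{i=1}^{g}S_i(z)S_{g+1-i}(z)$. The only blemish is the phrase ``shifts the number of arcs by exactly three''---the shift from $n+2$ to $n+3$ is of course one, which is what your formulas actually use.
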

\begin{proof}
Each $\mathcal{Q}'_g$-diagram is a $\mathcal{Q}'_g(n)$-diagram for a unique $n$.
As such we have
$$
\diagram
\mathcal{Q}'_g(n+2) \rrto^{\eta_n}\drrto^{\theta\circ \eta_n} &&  \mathcal{A}_{g+1}(n+3)\dto^{\theta}\\
 && \mathcal{B}_{g+1}(n+2)
\enddiagram
$$
Suppose the generating function of $A$- and $B$-shapes is 
$A_g(z)=\sum_n a_g(n)z^n$ and $B_g(z)=\sum_nb_g(n)z^n$, respectively. 
From the bijection $\theta:\mathcal{A}_{g+1}(n+3) \leftrightarrow \mathcal{B}_{g+1}(n+2)$, 
we obtain $b_{g+1}(n+2)=a_{g+1}(n+3)$. 
Then $S_{g+1}(z)=A_{g+1}(z)+ B_{g+1}(z)$ implies $S_{g+1}(z)= (1+1/z) A_{g+1}(z)$, or equivalently, 
$A_{g+1}(z)=\frac{S_{g+1}(z)}{1+1/z}$. By the bijection $\eta$, the generalized $2$-backbone 
shape $\mathfrak{s} \in \mathcal{Q}'_g$ has one arc less than $\eta(\mathfrak{s})$, 
which implies
$$
Q'_g(z)=\frac{A_{g+1}(z)}{z} = \frac{S_{g+1}(z)}{(1+z)}.
$$ 
Subtracting the set of disconnected $2$ backbone shapes, $S^2_g(z)$, the result follows.
\end{proof}
For genus $g=0,1,2$, we accordingly have
\begin{align*}
& {\bf Q}_0(z)=z^3+z^4\\
& {\bf Q}_1(z)=21 z^5+167 z^6+479 z^7+645 z^8+416 z^9+104 z^{10}\\
& {\bf Q}_2(z)= 1485 z^7 + 25401 z^8+ 172546 z^9+633370 z^{10} +1413585 z^{11} + 2015525 z^{12} \\
&\ \ \ \ \ \ \ \ \ \ +1852256 z^{13} +1064616 z^{14}+
348880 z^{15} + 49840 z^{16}\\
\end{align*}

\newpage
\section{Fibers} \label{S:5}

In the previous Section we computed the shape polynomials of shapes over
two backbones of fixed topological genus. Their coefficients can be recursively
determined and are directly related to the coefficients of polynomials of 
shapes over one backbone.

Furthermore, Theorem~\ref{T:main} implies a linear time sampling algorithm
for such $2$-backbone shapes of genus $g$. By means of their preimages, shapes
induce a natural partition of RNA complexes and here we shall study the 
sets of RNA complexes having a fixed shape, $\mathfrak{s}$,
to which we refer to as the fiber of $\mathfrak{s}$.

Given a $2$-backbone shape having $l$ arcs and genus $g$, $\mathfrak{s}_{g,l}$. 
Let $q_{\mathfrak{s}_{l,g}}(n)$ be the number of $2$-backbone matchings of genus $g$
having the shape $\mathfrak{s}_{l,g}$. 

\begin{theorem}
The generating function of matchings of genus $g$ having shape $\mathfrak{s}_{l,g}$
is given by 
$$
Q_{\mathfrak{s}_{l,g}}(z)=
\sum_n q_{\mathfrak{s}_l,g}(n)z^n={C_0}(z)^{2l+2} \frac{z^{l+2}}{(1-z C_0(z)^2)^{l+2}},
$$
where $C_0(z)= \frac{1-\sqrt{1-4z}}{2z}$.
In particular, the number of $2$-backbone
structures of length $n$ having genus $g$ and shape $\mathfrak{s}_{l.g}$
depends only on $l$ and  
$$
q_{\mathfrak{s}_l,g}(n) 
\sim \frac{k}{(l+1)!} n^{l+1} 4^{n-l-2}, 
$$
where $k$ is some positive constant.
\end{theorem}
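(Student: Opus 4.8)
The plan is to count, for a fixed $2$-backbone shape $\mathfrak{s}_{l,g}$, all the matchings that project onto it under the shape map. Recall from Section~\ref{S:3} that the shape map is the composite of two inverse operations: collapsing stacks into single arcs and removing $1$-arcs together with isolated vertices. To reconstruct the fiber I would therefore run this backwards. Starting from the $l$ arcs of $\mathfrak{s}_{l,g}$, I would \emph{inflate} each arc back into a stack of some positive length, and then \emph{reinsert} nested substructures into each interval that was collapsed. The guiding principle is that the shape retains exactly the topological core, so all genus-carrying crossings live in $\mathfrak{s}_{l,g}$ and every piece we glue back in must be a nested (genus-$0$) secondary structure; otherwise the genus would change and we would leave the fiber.

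Concretely, I would set up the bookkeeping as follows. Each of the $l$ arcs of the shape is replaced by a stack of length $\geq 1$, which in generating-function terms contributes a factor $z/(1-z\,C_0(z)^2)$ per arc, where the geometric series accounts for the arbitrary stack length and $C_0(z)$ is the Catalan generating function for nested structures decorating each side. The extra $z^{l+2}$ and the exponents $2l+2$ and $l+2$ in the claimed formula reflect the precise number of available intervals. A $2$-backbone shape with $l$ arcs has a fixed number of $\sigma$-intervals and gaps into which secondary structures may be inserted; my first task is to count these insertion slots exactly. The factor $C_0(z)^{2l+2}$ suggests $2l+2$ sites for a free nested insertion (the two sides of each of the $l$ arcs, plus a constant number of boundary slots associated with the two rainbows), while the $(l+2)$-th power in the denominator and the $z^{l+2}$ numerator should come from the $l+2$ collapsed P-intervals that each reopen into a stack carrying its own nested decorations. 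I would verify these counts carefully against the definitions of gap-, $P$-, and $\sigma$-intervals given in Section~\ref{S:2}, since a miscount of slots by even one changes every exponent.

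Once the generating function
$$
Q_{\mathfrak{s}_{l,g}}(z)=C_0(z)^{2l+2}\,\frac{z^{l+2}}{(1-z\,C_0(z)^2)^{l+2}}
$$
is established, the asymptotic statement follows by singularity analysis. The function $C_0(z)=\frac{1-\sqrt{1-4z}}{2z}$ has its dominant singularity at $z=1/4$, where $C_0(1/4)=2$ and $z\,C_0(z)^2\to 1$. Thus the denominator $\bigl(1-z\,C_0(z)^2\bigr)^{l+2}$ vanishes to order $l+2$ at $z=1/4$, producing a pole-type singularity superimposed on the square-root singularity of $C_0$. I would expand $1-z\,C_0(z)^2$ near $z=1/4$ using the local behavior $\sqrt{1-4z}$, extract the leading singular term of the form $(1-4z)^{-(l+1)/2}$ or the appropriate half-integer exponent, and apply the transfer theorem to read off $q_{\mathfrak{s}_l,g}(n)\sim \frac{k}{(l+1)!}\,n^{l+1}4^{n-l-2}$. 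The polynomial growth order $n^{l+1}$ is exactly what an order-$(l+2)$ confluent singularity against a square root produces, and crucially it depends only on $l$, not on $g$ or on the particular shape, which is the content of the final assertion.

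The main obstacle I anticipate is the exact enumeration of insertion sites, i.e.\ pinning down why the exponents are precisely $2l+2$, $l+2$, and $z^{l+2}$ rather than off-by-one variants. The subtlety is that not every interval of the shape is freely decorable: $P$-intervals behave differently from $\sigma$-intervals, gaps between the two backbones must be handled with care so as not to accidentally reconnect or disconnect the complex, and the two rainbows impose boundary conditions that constrain the nested insertions at the extremes. I would resolve this by working through a small example, say the genus-$0$ shape with $l=3$ from ${\bf Q}_0(z)=z^3+z^4$, and checking that the reconstruction recovers the known count; getting this base case to agree is the cleanest way to confirm that the slot-counting and hence the exponents are correct before trusting the general formula and its singularity analysis.
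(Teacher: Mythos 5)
You have the right overall strategy --- it is the same as the paper's: reconstruct the fiber by inflating the shape's arcs and inserting genus-$0$ matchings into its unit intervals, then apply singularity analysis. But the two places where you defer the work are exactly where your tentative explanations fail. The exponent $l+2$ does not come from ``$l+2$ collapsed $P$-intervals'' --- a shape has no $P$-intervals at all, since it has no stacks. It comes from the fact that $l+2$ arcs are inflated: the $l$ arcs of the shape \emph{plus the two rainbows}. The paper performs this inflation in two stages, first turning each arc into a nonempty sequence of ``induced arcs,'' each carrying a nontrivial genus-$0$ matching in one or both of its $P$-intervals (generating function $N(z)=z(C_0(z)^2-1)$, sequence $M(z)=1/(1-N(z))$), and then replacing every arc of the resulting diagram by a stack; the composite per-arc factor is $z/(1-zC_0(z)^2)$. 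This staging is what makes the count bijective under the \emph{iterated} collapse map --- your single-factor ansatz happens to give the same expression but does not justify why each preimage is produced exactly once. Similarly, $2l+2$ is not ``two sides of each arc plus boundary slots'': the planted shape has $2(l+2)$ vertices, hence $2l+3$ unit intervals, exactly one of which is the gap between the two backbones; the remaining $2l+2$ are $\sigma$-intervals, each receiving a $C_0$-factor.

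The more serious problem is the asymptotics. From $C_0=1+zC_0^2$ one gets
$$
1-zC_0(z)^2 \;=\; 2-C_0(z)\;=\;\frac{2\sqrt{1-4z}}{1+\sqrt{1-4z}},
$$
so $(1-zC_0(z)^2)^{-(l+2)}$ has a singularity of type $(1-4z)^{-(l+2)/2}$ at $z=1/4$, not a pole of order $l+2$ in $z$. Your assertion that ``an order-$(l+2)$ confluent singularity against a square root produces $n^{l+1}$'' is false: the transfer theorem applied to $(1-4z)^{-(l+2)/2}$ yields growth of order $n^{l/2}4^n$, not $n^{l+1}4^n$. So the stated subexponential exponent does not follow from the computation you outline, and this step must be reworked rather than waved through; the paper itself only asserts that the singularity has ``multiplicity $l+2$'' without displaying the local expansion, so you cannot simply defer to it here. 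Your proposal to sanity-check the exponents against the genus-$0$ case is a good instinct and would have exposed both issues.
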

\begin{figure}[!ht]
 \begin{center}
 \includegraphics[width=0.6\textwidth]{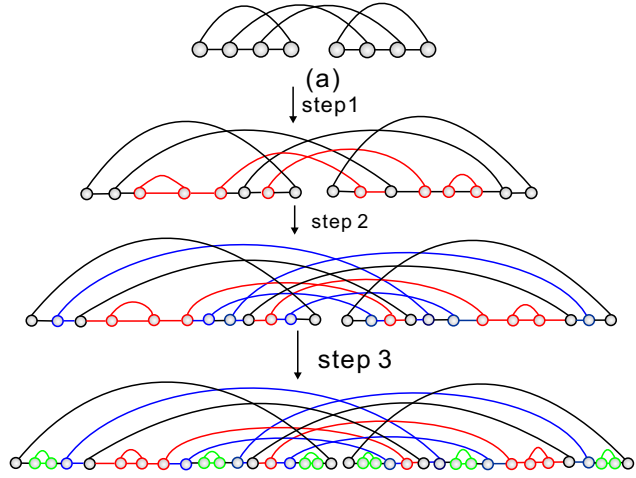}
 \end{center}
\caption{\small (a) a shape of genus $1$ with $4$ arcs; \protect \\
step 1: inflate each arc to a sequence of induced arcs (red);\protect \\
step 2: inflate each exterior arc to a stack (blue); \protect \\
step 3: insert a $\mathcal{C}_0$-matching into the $\sigma$-intervals (green).
}\label{F:insert}
 \end{figure}

\begin{proof}
By the following steps, we can  inflate a RNA-complex from a shape.

Step $1$: we inflate each arc in $\mathfrak{s}_{l,g}$ into a sequence of induced arcs,
an induced arc $\mathcal{N}$ is an exterior arc together with at least 
one non-trivial genus $0$ matching in either one or both $P$-intervals, 
Clearly, we have ${N}(z)=z\left(2({ C_0}(z)-1)+({ C_0}(z)-1)^2\right)= z(C_0(z)^2-1)$. 
Furthermore, we inflate the arc into a sequence $\mathcal{M}$ 
of induced arcs ${M}(z)=\frac{1}{1-z\left({ C_0}(z)^2-1\right)}$.
Inflating all $l+2$ arcs (including the $2$ rainbows)
into a sequence of induced arcs, leads to
\begin{align*}
z^{l+2} {M}(z)^{l+2}=z^{l+2}\left(\frac{1}{1-z({C_0}(z)^2-1)}\right)^{l+2}.
\end{align*}
Denote the matching after this step by $x_1$.\\
Step $2$: we inflate each arc in $x_1$ into a stack. The corresponding 
generating function is 
\begin{align}
\left(\frac{\frac{z}{1-z}}{1-\frac{z}{1-z}
\left({C_0}(z)^2-1\right)}\right)^{l+2} =
\frac{z^{l+2}}{(1-z C_0(z)^2)^{l+2}}.
\end{align}
Step 3: we insert a $\mathcal{C}_0$ matching into the 
respective $(2l+2)$ $\sigma$-intervals of $\mathfrak{s}_{l.g}$. 
The corresponding generating function is ${C_0}(z)^{2l+2}$.

Combining the above three steps, we derive 
$$Q_{\mathfrak{s}_l,g}(z)=\sum_n q_{\mathfrak{s}_l,g}(n)z^n= 
{C_0}(z)^{2l+2} \frac{z^{l+2}}{(1-z C_0(z)^2)^{l+2}}, 
$$
where $q_{\mathfrak{s}_l,g}(n)$ denotes the number of genus $g$ matchings generated
from $\mathfrak{s}_{l,g}$.

The generating function has an unique, dominant singularity $\rho=1/4$ with multiplicity 
$l+2$. Standard singularity analysis \citet{flajolet}, implies
$$
q_{\mathfrak{s}_l,g}(n) \sim \frac{k}{(l+1)!} n^{l+1} 4^{n-l-2}.
$$
\end{proof}

\begin{corollary}
The generating function $W_g(z)$ of $2$-backbone matchings of genus $g$ is
given by
$$ 
W_g(z) = \sum_l q_g(l) Q_{\mathfrak{s_l},g}(z) = 
 \sum_l  q_g(l) {C_0}(z)^{2l+2} \frac{z^{l+2}}{(1-z C_0(z)^2)^{l+2}}.
$$
\end{corollary}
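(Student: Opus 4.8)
The plan is to realize $W_g(z)$ as a disjoint sum over the fibers of the shape-projection map. First I would note that the shape-projection---iteratively collapsing stacks and removing $1$-arcs and isolated vertices---assigns to every $2$-backbone matching a unique shape and preserves the topological genus. Hence the set of all $2$-backbone matchings of genus $g$ decomposes into the fibers indexed by the shapes $\mathfrak{s}$ of genus $g$, each fiber being exactly the preimage enumerated in the previous theorem. Passing to generating functions, this partition gives immediately
$$
W_g(z)=\sum_{\mathfrak{s}\ \mathrm{of\ genus}\ g} Q_{\mathfrak{s},g}(z).
$$

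The decisive simplification I would invoke is the content of the preceding theorem: the fiber generating function $Q_{\mathfrak{s}_{l,g}}(z)$ depends only on the number of arcs $l$ of the shape and on $g$, not on the particular shape. This is what permits the collection of terms. Grouping the genus-$g$ shapes by their number of arcs, and recalling from Theorem~\ref{T:shap2} that $q_g(l)$ is precisely the number of genus-$g$ two-backbone shapes with $l$ arcs (so that $Q_g(z)=\sum_l q_g(l)z^l$), the inner sum collapses to $q_g(l)$ identical copies of $Q_{\mathfrak{s}_l,g}(z)$, yielding
$$
W_g(z)=\sum_l q_g(l)\,Q_{\mathfrak{s}_l,g}(z).
$$
Substituting the explicit expression $Q_{\mathfrak{s}_l,g}(z)={C_0}(z)^{2l+2}z^{l+2}/(1-zC_0(z)^2)^{l+2}$ from the previous theorem then produces the claimed formula.

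The remaining checks are routine bookkeeping. I would confirm that $l$ ranges exactly over the arc-numbers occurring in $Q_g(z)$ (a finite set, by the two-backbone analogue of Lemma~\ref{L:finite}), and that no matching is counted twice; the latter is automatic since the shape-projection is a genuine map and its fibers are therefore disjoint. The single conceptual ingredient, on which the entire statement rests, is the shape-independence of the fiber count established in the previous theorem; everything else is assembling a disjoint union. I therefore expect no real obstacle beyond invoking that independence carefully and verifying the disjointness of the fibers.
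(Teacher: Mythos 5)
Your argument is correct and is precisely the (implicit) reasoning behind the corollary in the paper: partition the $2$-backbone matchings of genus $g$ into fibers of the shape projection, use the shape-independence of $Q_{\mathfrak{s}_{l,g}}(z)$ established in the preceding theorem to group shapes by their arc number $l$, and weight by the coefficients $q_g(l)$ of the shape polynomial from Theorem~\ref{T:shap2}. The paper states the corollary without proof because this is exactly the intended one-line derivation.
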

In particular we have $W_0(z)=\frac{z^3}{(1-4z)^2}$,
$$
W_1(z)= \frac{(20z+21)z^5}{(1-4z)^5},\quad
W_2(z)= \frac{(1696 z^2+6096 z+1485)z^7}{(1-4z)^8}.
$$
We conclude this section by discussing loops in shape-fibers. 
By construction, there are only multi-loops and pseudoknot-loops in a shape. 
We observe that the lengths of the original shape-loops increase in structures of 
the shape-fiber. Structures of the shape-fiber exhibit in addition hairpin loops, 
interior loops and two types of multi-loops, see Fig.~\ref{F:loops}. 

 \begin{figure}[ht]
 \begin{center}
 \includegraphics[width=0.75\textwidth,height=0.35\textwidth]{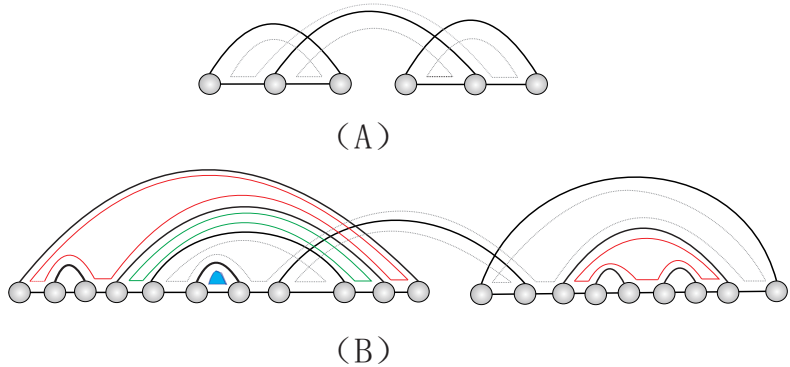}
 \end{center}
 \caption{A shape with a distinguished loop (A). 
          Inflation generates hairpin loops (blue), interior loops (green) and 
          two types of non-shape multi-loops (red) (B). 
          The length of the distinguished shape-loop increased by 
          $2$.}\label{F:loops}
 \end{figure}
\newpage
\section{Discussion}\label{S:6}

In this paper we study shapes of RNA complexes. We show that these shapes
are directly related to shapes of RNA structures of increased topological 
genus. More precisely, we show in Lemma~\ref{L:bi} that there is a bipartition 
of RNA-shapes into $A$-shapes and $B$-shapes. Furthermore, $A$- and $B$-shapes are
in one-to-one correspondence. We establish in Theorem~\ref{T:main} that each 
respective type is in one-to-one correspondence to shapes of RNA complexes.
These relations have various implications.

First Lemma~\ref{L:finite} guarantees that there are only finitely many such shapes. 
This leads to the shape polynomials for shapes of fixed topological genus $g$. The 
above correspondences
reduce the computation of the coefficients of these polynomials for shapes of RNA complexes 
to those of shapes of RNA structures. For the latter Proposition~\ref{P:recursion}
gives a simple two-term recursion, which allows us to obtain any such polynomials for shapes
of structures and complexes of fixed topological genus in constant time.

Secondly we obtain a sampling algorithm, Algorithm~\ref{A:bi} for shapes of RNA 
complexes that has linear time complexity. Algorithm \ref{A:bi} and the sampling algorithm of 
RNA shapes are freely available at
$$
\text{\tt http://imada.sdu.dk/~duck/bishape.c}
$$
This algorithm provides us with a plethora of statistics for shapes of RNA complexes
of fixed topological genus. To illustrate local and global uniformity, we display in 
Fig.~\ref{F:uniformshape} the multiplicities of shapes of genus $1$. Here  by local 
uniformity we mean that we can uniformly sample shape of RNA complexes with a 
fixed number of arcs.

\begin{figure}[H]
\begin{center}
\includegraphics[width=0.45\textwidth,height=0.35\textwidth]{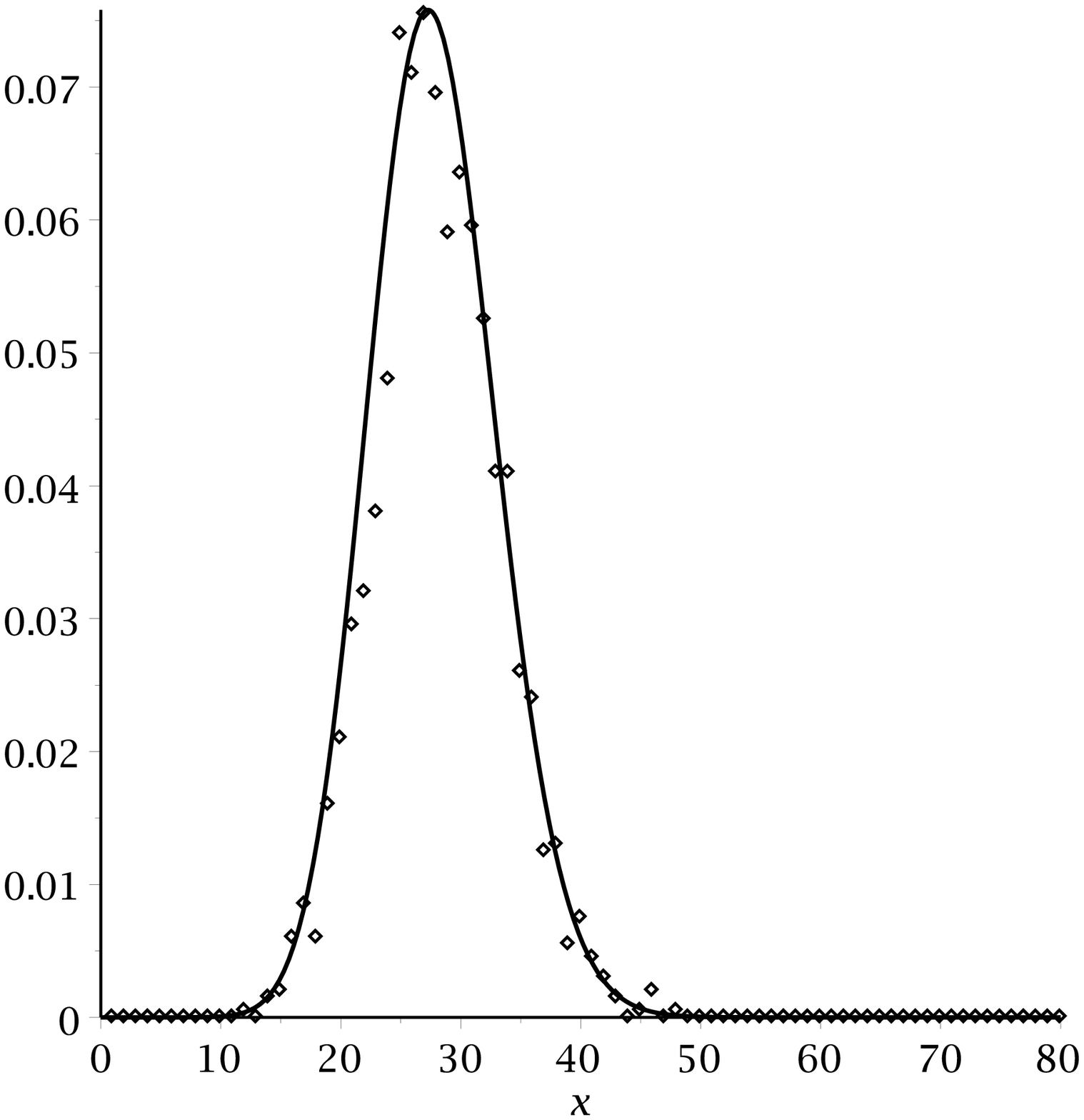}
\includegraphics[width=0.45\textwidth,height=0.35\textwidth]{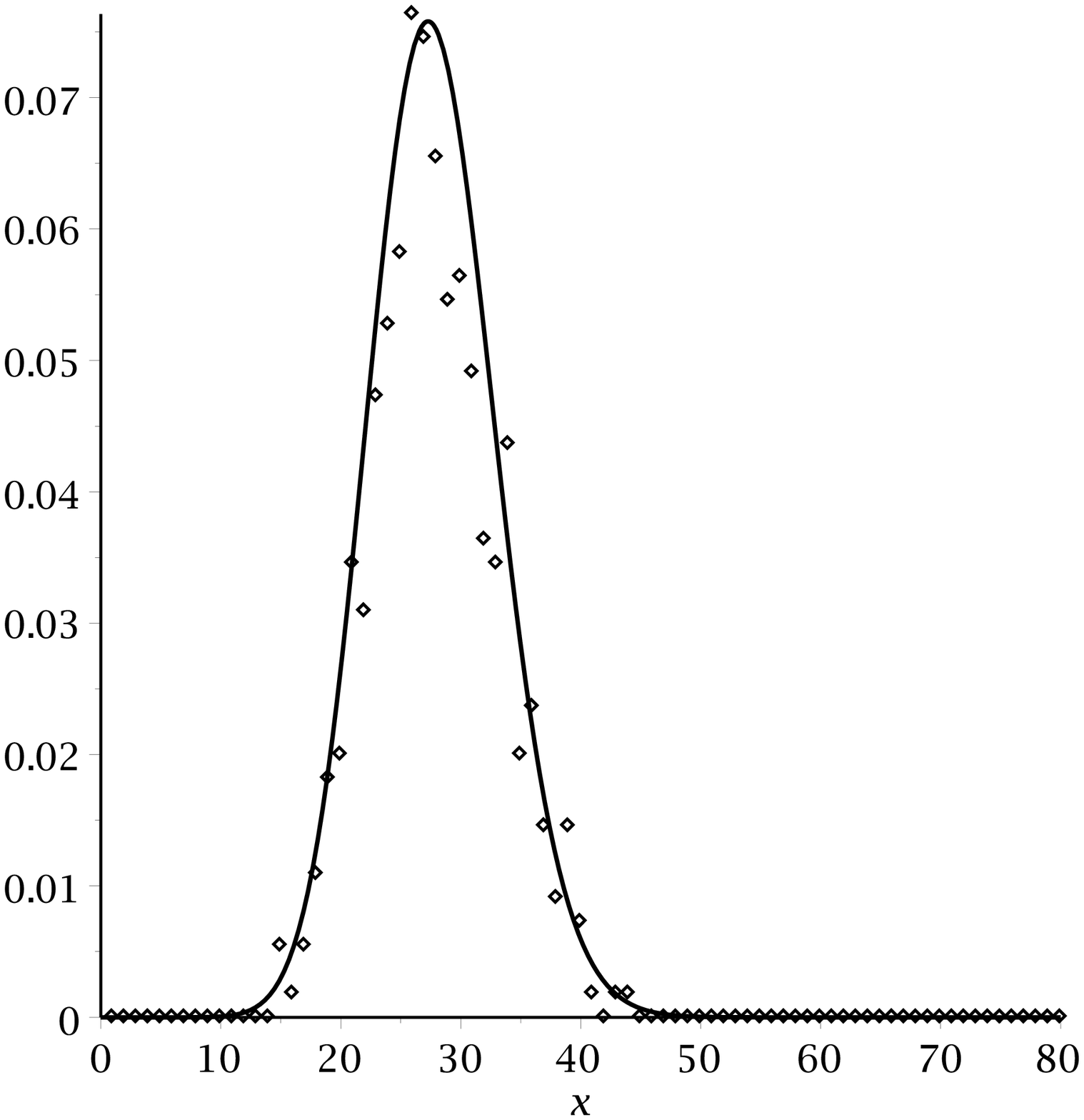}
\end{center}
\caption{ \small Global and local sampling of shapes of RNA complexes of fixed 
           topological genus:
$N=5\times 10^5$ shapes of genus $1$ were generated and we
display their multiplicities (dots) together with the binomial coefficients that are observed
from uniform sampling (LHS).
Local sampling: we generate $N=5\times 10^5$ shapes of genus $1$ 
with $7$ arcs(RHS).
}
\label{F:uniformshape}
\end{figure}

Lemma~\ref{L:finite} shows that there are only finitely many shapes of RNA complexes.
Hence the shape polynomial determines their numbers filtered by the number of arcs. 
This means that we can extract a finite observable from interaction structures that 
captures their topological core. 

Let us calibrate this information by inspecting what happens when we sample uniformly 
RNA complexes of fixed topological genus \citet{benjamin}.
We uniformly sample RNA complexes having genus $1$ and record the frequencies of their 
associated shapes. We observe that the distribution of shapes 
of different lengths equals the distribution obtained by normalizing the coefficients of 
the shape polynomial, see Fig.~\ref{F:coe}.

\begin{figure}[H]
\begin{center}
\includegraphics[width=0.5\textwidth,height=0.45\textwidth]{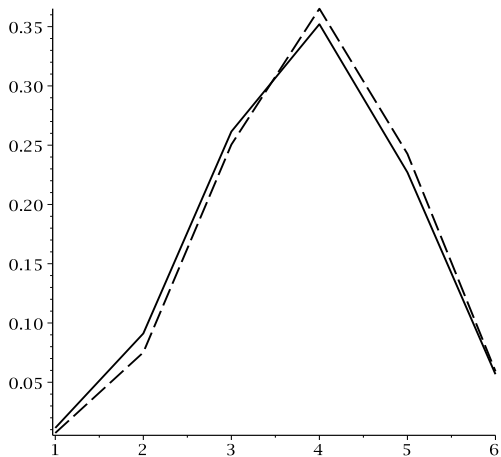}
\end{center}
\caption{ \small Uniform sampling of RNA complexes of genus $1$ with 
length $40,80,100,150,200$, ($5\times 10^5$). 
The solid curve displays the distribution induced by the coefficients of the shape 
polynomial, while the dashed curve displays distribution obtained from the sampling.
Displayed is the average of the coefficients obtained from sampling the above different
lengths.
}
\label{F:coe}
\end{figure}

Accordingly, the shape polynomial represents precisely the uniform case. As a result
we can now compute the shapes of databases of RNA complexes and derive empirical
coefficients (distributions) and hence extract finite information from
databases reflecting the topological properties of the biological complexes.

Along these lines we study the shapes of biological RNA complexes 
obtained from \citet{Richter}. Due to the fact that the data set contained only exterior
arcs we derived only one shape of genus zero, see Fig.~\ref{F:datashape}.
   \begin{figure}[H]
   \begin{center}
   \includegraphics[width=0.25\textwidth,height=0.15\textwidth]{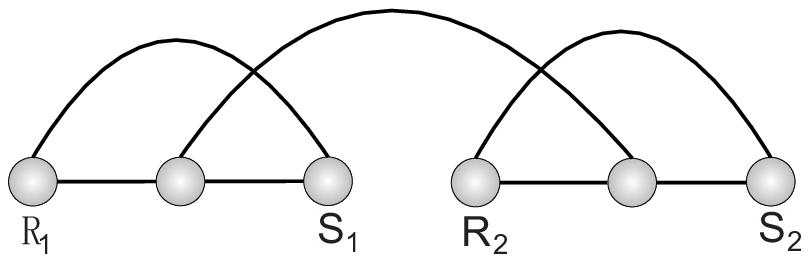}
   \end{center}
   \caption{ \small The shape extracted from the 
   biological RNA complexes \citet{Richter}.
   }
   \label{F:datashape}
   \end{figure}
We accordingly compare the distribution of the exterior stack lengths of biological
with that of uniformly sampled RNA complexes, see Fig.~\ref{F:stacklength2}. 
 \begin{figure}[H]
 \begin{center}
 \includegraphics[width=0.45\textwidth,height=0.45\textwidth]{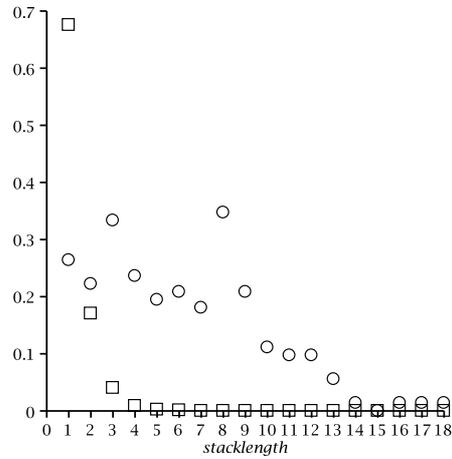}

 \end{center}
 \caption{ \small the distribution of the lengths of exterior stacks in uniformly
 sampled structures having the shape in Fig.~\ref{F:datashape} (box); 
 the distribution of the length of exterior stacks in the biological RNA complexes 
obtained from \citet{Richter} (circle).
 }
 \label{F:stacklength2}
 \end{figure}


We finally study loops in shapes of RNA complexes.
By construction such loops are multiloops, except of the two rainbow loops.
We uniformly generate $5\times 10^5$ shapes of RNA complexes from
genus $0$ to $5$ and display the average number of loops, see Fig.~\ref{F:shapeloop}. 
The data suggest a central limit theorem for the average number of loops since their
mean scales linearly with topological genus.
 \begin{figure}[H]
 \begin{center}
 \includegraphics[width=0.45\textwidth,height=0.45\textwidth]{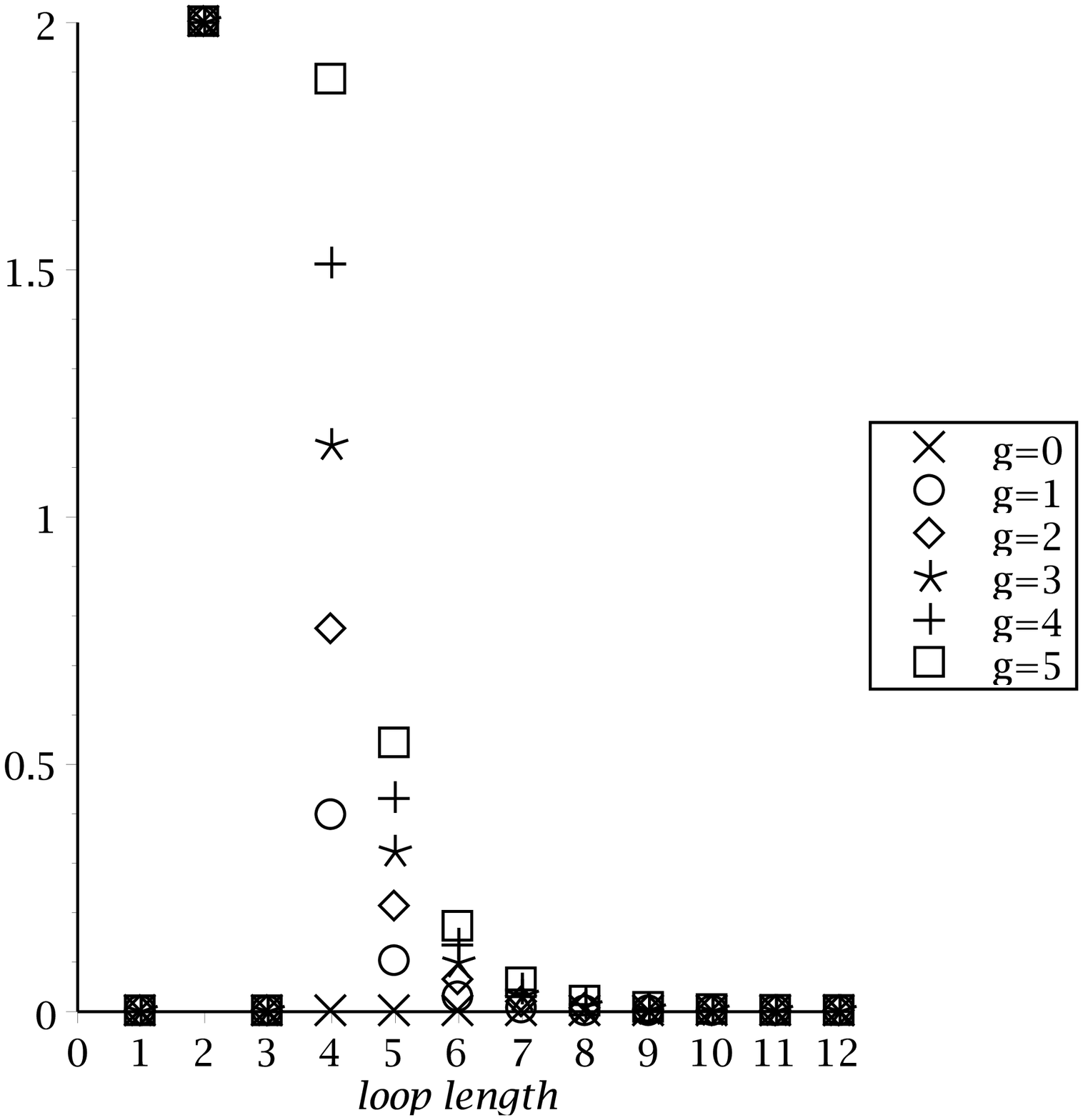}
  \includegraphics[width=0.45\textwidth,height=0.45\textwidth]{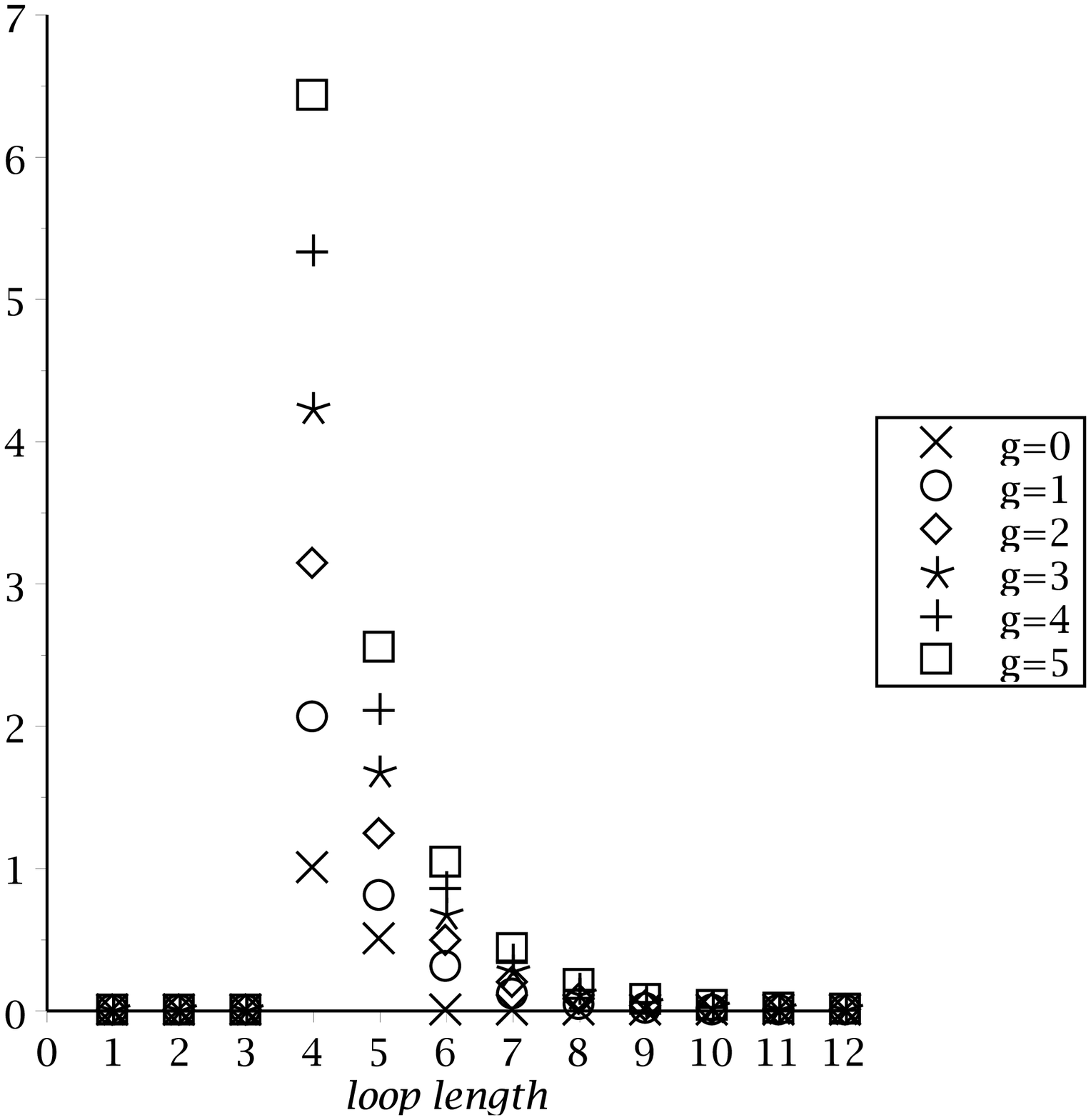}
 \end{center}
 \caption{ \small  The distribution of the average number of loops 
 in the shapes of different genus: the distribution of the $\alpha$-loops 
(loops contained in one backbone) (LHS).
The distribution of the $\beta$-loops (loops over two backbones) (RHS).
 }
 \label{F:shapeloop}
 \end{figure}
\section*{Acknowledgements}\label{sec:ack}
We wish to thank Fenix W.D. Huang and Thomas J.X. Li for discussions.
This work is funded by the Future and Emerging Technologies (FET) programme 
of the European Commission within the Seventh Framework Programme (FP7), 
under the FET-Proactive grant agreement TOPDRIM, FP7-ICT-318121.

\section*{Author Disclosure Statement}
No competing financial interests exist.

\bibliographystyle{plainnat}
\bibliography{reference}

\newpage
\begin{table}
\begin{center}
\begin{tabular}{c|ccccc}
 & g=1 & 2 & 3 & 4 & 5 \\
\hline
  t=0 & 1 & 21 & 1485 & 225225 & 59520825 \\
  1 &  & 105 & 18018 & 4660227 & 1804142340 \\
  2 &  &  & 50050 & 29099070 & 18472089636 \\
  3 &  &  &  & 56581525 & 78082504500 \\
  4 &  &  &  &  &  117123756750
\end{tabular}
\end{center}
\caption{\small The coefficients $\kappa_t^{(g)}$.
}\label{T:kappa}
\end{table}

\end{document}